\definecolor{myurlcolor}{rgb}{0.1,0.1,0.8}
\definecolor{mylinkcolor}{rgb}{0.05,0.05,0.4}
\newcommand{\bref}[1]{(\ref{#1})}
\newcommand{\cat}[1]{\mathscr{#1}}
\newcommand{\fcat}[1]{\mathbf{#1}}
\newcommand{\such}{:}
\newcommand{\integers}{\mathbb{Z}}
\newcommand{\lbl}[1]{\label{#1}}
\newcommand{\reals}{\mathbb{R}}
\newcommand{\demph}[1]{\textbf{\textup{#1}}}
\newcommand{\iso}{\cong}
\newcommand{\sub}{\subseteq}
\newcommand{\nat}{\mathbb{N}}	
\newcommand{\of}{\circ}
\newcommand{\toby}[1]{\stackrel{#1}{\longrightarrow}}
\newcommand{\mg}[1]{|#1|}
\newcommand{\from}{\colon}
\newcommand{\R}{\reals}
\newcommand{\Z}{\integers}
\newcommand{\Q}{\mathbb{Q}}
\newcommand{\vc}[1]{\mathbf{#1}} 
\DeclareMathOperator{\supp}{supp}
\newcommand{\csuch}{\colon}
\newcommand{\hardref}[1]{#1}
\newcommand{\hlf}{\tfrac{1}{2}}
\newcommand{\cpd}{\sqcup}
\newcommand{\ppi}{{\boldsymbol{\pi}}}
\newcommand{\ggamma}{{\boldsymbol{\gamma}}}
\newcommand{\ssigma}{{\boldsymbol{\sigma}}}
\newcommand{\nnu}{{\boldsymbol{\nu}}}
\newcommand{\ttau}{{\boldsymbol{\tau}}}
\newcommand{\fld}{K}
\newcommand{\fq}[1]{q_{#1}}
\newcommand{\Zp}{\Z/p\Z}
\newcommand{\Zps}{\Z/p^2\Z}
\newcommand{\Ztwo}{\Z/2\Z}
\newcommand{\dvd}{\mathrel{\mid}}
\newcommand{\ndvd}{\mathrel{\nmid}}
\newcommand{\FinProbp}{\fcat{FinProb}_p}
\newcommand{\Rat}[2]{\Delta_{#1, #2}}
\newcommand{\Rentp}{\mathbb{E}_p}
\newtheorem{thm}{Theorem}[section]
\newtheorem{propn}[thm]{Proposition}
\newtheorem{lemma}[thm]{Lemma}
\newtheorem{cor}[thm]{Corollary}
\newtheorem{defn}[thm]{Definition}
\newtheorem{example}[thm]{Example}
\newtheorem{remark}[thm]{Remark}
\theoremstyle{nonumberplain}
\newtheorem{proof}{Proof}
\newcommand{\theoremtobeproved}{}
\newtheorem{pfoftheorem}{Proof of \theoremtobeproved}
\newenvironment{pfof}[1]
{
\renewcommand{\theoremtobeproved}{#1}
\begin{pfoftheorem}
}
{\end{pfoftheorem}}
\title{Entropy modulo a prime}
\author{Tom Leinster%
\thanks{School of Mathematics, University of Edinburgh, Scotland; email
  Tom.Leinster@ed.ac.uk.  MSC 2010: 94A17 (primary), 11A07, 11A99, 11T06,
  13N15.}}
\date{\vspace{-3ex}}
\begin{document}

\sloppy
\maketitle

\begin{abstract}
Building on work of Kontsevich, we introduce a definition of the entropy of
a finite probability distribution in which the `probabilities' are integers
modulo a prime~$p$.  The entropy, too, is an integer mod~$p$.  Entropy
mod~$p$ is shown to be uniquely characterized by a functional equation
identical to the one that characterizes ordinary Shannon entropy.  We also
establish a sense in which certain real entropies have residues mod~$p$,
connecting the concepts of entropy over $\R$ and over $\Zp$.  Finally,
entropy mod~$p$ is expressed as a polynomial which is shown to satisfy
several identities, linking into work of Cathelineau, Elbaz-Vincent and
Gangl on polylogarithms.
\end{abstract}

\tableofcontents

\section{Introduction}

The concept of entropy is applied in almost every branch of science.  Less
widely appreciated, however, is that from a purely algebraic perspective,
entropy has a very simple nature.  Indeed, Shannon entropy is characterized
nearly uniquely by a single equation, expressing a recursivity property.
The purpose of this work is to introduce a parallel notion of entropy for
probability distributions whose `probabilities' are not real numbers, but
integers modulo a prime $p$.  The entropy of such a distribution is also an
integer mod~$p$.

We will see that despite the (current) lack of scientific application, this
`entropy' is fully deserving of the name.  Indeed, it is characterized by a
recursivity equation formally identical to the one that characterizes
classical real entropy.  It is also directly related to real entropy, via a
notion of residue informally suggested by Kontsevich~\cite{KontOHL}.

Among the many types of entropy, the most basic is the Shannon entropy of a
finite probability distribution $\ppi = (\pi_1, \ldots, \pi_n)$, defined as
\begin{align}
H(\ppi) = - \sum_{i \csuch \pi_i \neq 0} \pi_i \log \pi_i 
\in \R.
\end{align}
It is this that we will imitate in the mod~$p$ setting.  

The aforementioned recursivity property concerns the entropy of the
composite of two processes, in which the nature of the second process
depends on the outcome of the first.  Specifically, let $\ppi =
(\pi_1, \ldots, \pi_n)$ be a finite probability distribution, and let
$\ggamma^1, \ldots, \ggamma^n$ be further distributions, writing $\ggamma^i =
(\gamma^i_1, \ldots, \gamma^i_{k_i})$.  Their \demph{composite} is
\begin{align}
\lbl{eq:comp}
\ppi \of (\ggamma^1, \ldots, \ggamma^n)
=
(\pi_1 \gamma^1_1, \ldots, \pi_1 \gamma^1_{k_1},
\ \ldots,\ 
\pi_n \gamma^n_1, \ldots, \pi_n \gamma^n_{k_n}),
\end{align}
a probability distribution on $k_1 + \cdots + k_n$ elements.  (Formally,
this composition endows the sequence of simplices $(\Delta^{n - 1})_{n =
  0}^\infty$ with the structure of an operad.)  The \demph{chain rule} (or
\demph{recursivity} or \demph{grouping} law) for Shannon entropy is that
\begin{align}
\lbl{eq:chain-rule}
H\bigl( \ppi \of (\ggamma^1, \ldots, \ggamma^n) \bigr)=
H(\ppi) + \sum_{i = 1}^n \pi_i H(\ggamma^i).
\end{align}

The chain rule can be understood in terms of information.  Suppose we toss
a fair coin and then, depending on the outcome, either roll a fair die or
draw fairly from a pack of~52 cards.  There are $6 + 52 = 58$ possible
final outcomes, and their probabilities are given by the composite
distribution
\begin{align}
\bigl(\hlf, \hlf\bigr) \of 
\Bigl( 
\bigl(\underbrace{\tfrac{1}{6}, \ldots, \tfrac{1}{6}}_6\bigr), \ 
\bigl(\underbrace{\tfrac{1}{52}, \ldots, \tfrac{1}{52}}_{52}\bigr) 
\Bigr)   
=
\bigl(\underbrace{\tfrac{1}{12}, \ldots, \tfrac{1}{12}}_6, 
\underbrace{\tfrac{1}{104}, \ldots, \tfrac{1}{104}}_{52}\bigr).
\end{align}
Now, the entropy of a distribution $\ppi$ measures the amount of
information gained by learning the outcome of an observation drawn from
$\ppi$ (measured in bits, if logarithms are taken to base $2$).  In our
example, knowing the outcome of the composite process tells us
with certainty the outcome of the initial coin toss, plus with probability
$1/2$ the outcome of a die roll and with probability $1/2$ the outcome of a
card draw.  Thus, the entropy of the composite distribution should be equal
to 
\begin{align}
H\bigl(\hlf, \hlf\bigr)
+
\hlf H\bigl(\tfrac{1}{6}, \ldots, \tfrac{1}{6}\bigr)
+
\hlf H\bigl(\tfrac{1}{52}, \ldots, \tfrac{1}{52}\bigr).
\end{align}
This is indeed true, and is an instance of the chain rule.

A classical theorem essentially due to Faddeev~\cite{Fadd} states that up
to a constant factor, Shannon entropy $H$ is the only continuous function
assigning a real number to each finite probability distribution in such a
way that the chain rule holds.  In this sense, the chain rule is the
characteristic property of entropy.

Our first task will be to formulate the right definition of entropy
mod~$p$.  An immediate obstacle is that there is no logarithm function
mod~$p$, at least in the most obvious sense.  Nevertheless, the classical
Fermat quotient turns out to provide an acceptable substitute
(Section~\ref{sec:log-der}).  Closely related to the real logarithm is the
nonlinear derivation $\partial \from x \mapsto -x \log x$, and its mod~$p$
analogue is $\partial \from x \mapsto (x - x^p)/p$ (a
\demph{$p$-derivation}, in the language of Buium~\cite{BuiuDCA}).

The entropy of a mod~$p$ probability distribution $\ppi = (\pi_1, \ldots,
\pi_n)$, with $\pi_i \in \Zp$, is then defined as
\begin{align}
H(\ppi) 
=
\sum \partial(a_i) - \partial\Bigl( \sum a_i \Bigr)
=
\frac{1}{p} \biggl( 1 - \sum a_i^p \biggr)
\in 
\Zp,
\end{align}
where $a_i$ is an integer representing $\pi_i \in \Zp$
(Section~\ref{sec:defn-ent}).  The definition is independent of the choice
of representatives $a_i$.  This entropy satisfies a chain rule formally
identical to that satisfied by real entropy (Section~\ref{sec:properties}).
We prove in Section~\ref{sec:char-ent} that up to a constant factor, $H$ is
the one and only function satisfying the chain rule.  This is the main
justification for the definition.

Classical Shannon entropy quantifies the information associated with a
probability space, but one can also seek to quantify the information lost
by a \emph{map} between probability spaces, seen as a deterministic
process.  For example, if one chooses uniformly at random a binary number
with ten digits, then discards the last three, the discarding process loses
three bits.  

There is a formal definition of information loss, it includes the
definition of entropy as a special case, and it has been uniquely
characterized in work of Baez, Fritz and Leinster~\cite{CETIL}.  The
advantage of working with information loss rather than entropy is that the
characterizing equations look exactly like the linearity and homomorphism
conditions that occur throughout algebra---in contrast to the chain rule.
In Section~\ref{sec:loss}, we show that an analogous characterization
theorem holds mod~$p$.

We then make precise an idea of Kontsevich linking entropy over $\R$ with
entropy over $\Zp$.  Consider a distribution $\ppi$ whose probabilities
$\pi_i$ are rational numbers.  On the one hand, we can take its real
entropy $H_\R(\ppi)$.  On the other, whenever $p$ is a prime not dividing
the denominator of any $\pi_i$, we can view $\ppi$ as a probability
distribution mod~$p$ and therefore take its entropy $H_p(\ppi)$ mod~$p$.
Kontsevich suggested viewing $H_p(\ppi) \in \Zp$ as the `residue' of
$H_\R(\ppi) \in \R$, and Section~\ref{sec:res} establishes that this
construction has the basic properties that one would expect from the name.

Finally, we analyse $H$ not as a function but as a polynomial
(Sections~\ref{sec:poly-defn} and~\ref{sec:poly-ids}).  We show that
\begin{align}
H(\ppi)
=
-\sum_{\substack{0 \leq r_1, \ldots, r_n < p\\ r_1 + \cdots + r_n = p}}
\frac{\pi_1^{r_1} \cdots \pi_n^{r_n}}{r_1! \cdots r_n!}
\end{align}
(which \emph{formally} is equal to $\tfrac{1}{p}(\sum \pi_i^p - (\sum
\pi_i)^p)$).  We prove several identities in this polynomial.  In the case
of distributions $(\pi, 1 - \pi)$ on two elements, we find that
\begin{align}
\lbl{eq:poly-two}
H(\pi, 1 - \pi) 
=
\sum_{r = 1}^{p - 1} \frac{\pi^r}{r}
\end{align}
for $p \neq 2$, and we discuss some properties that this polynomial
possesses.

The present work should be regarded as a beginning rather than an end.  In
information theory, Shannon entropy is just the simplest of a family of
fundamental concepts including relative entropy, conditional entropy, and
mutual information.  It is natural to seek their mod~$p$ analogues, and to
prove analogous theorems; however, this is not attempted here.

\paragraph{Related work}
This work builds on a two-and-a-half page note of
Kontsevich~\cite{KontOHL}.  In it, Kontsevich did just enough to show that
a reasonable definition of entropy mod~$p$ must exist, but without actually
giving the definition except for probability distributions on two elements.
He also briefly suggested viewing the entropy mod $p$ of a distribution
with rational probabilities as the `residue' of its real entropy.  The
relationship between his note and the present work is further clarified at
the start of Section~\ref{sec:res} and the end of
Section~\ref{sec:poly-ids}.

Kontsevich's note appears to have been motivated by questions about
polylogarithms.  (The polynomial~\eqref{eq:poly-two} is a truncation of the
power series of $-\log(1 - \pi)$, and one can consider more generally a
truncation $\sum_{r = 1}^{p - 1} \pi^r/r^m$ of the $m$th polylogarithm.)
That line of enquiry has been pursued by Elbaz-Vincent and
Gangl~\cite{EVGOPI,EVGFPM}.  As recounted in the introduction
to~\cite{EVGOPI}, some of Kontsevich's results had already appeared in
papers of Cathelineau~\cite{CathSHS,CathRDP}.  The connection between this
part of algebra and information theory was noticed at least as far back as
1996 (\cite{CathRDP}, p.~1327).  In the present work, however,
polylogarithms play no part and entropy takes centre stage.

A fully-fledged theory of information cohomology has been introduced by
Baudot and Bennequin~\cite{BaBe} and extended in several directions by
Vigneaux~\cite{VignTSS}; it concerns topos invariants of categories of
random variables.  A basic result is that Shannon entropy is the only
nontrivial cohomology class in degree $1$ (for a suitable choice of
coefficients).  The characterization below of entropy mod $p$ can also be
understood in terms of degree $1$ information cohomology, over 
$\Zp$. 

Unlike much previous work on characterizations of entropies, we are able to
do without symmetry axioms.  (Nor is symmetry used in information
cohomology, as noted after Theorem~1 in~\cite{BaBe}.)  For example,
Faddeev's theorem on real entropy~\cite{Fadd} characterized it as the
unique continuous quantity satisfying the chain rule \emph{and} invariant
under permutation of its arguments.  However, a careful reading of the
proof shows that the symmetry assumption can be dropped.  The
axiomatization of entropy via the so-called fundamental equation of
information theory also uses a symmetry assumption.  While symmetry appears
to be essential to \emph{that} approach (Remark~\ref{rmk:k}), we will not
need it.

The chain rule~\eqref{eq:chain-rule} is often stated in the case $k_1 = 2$,
$k_2 = \cdots = k_n = 1$, or occasionally in the different case $n = 2$.
In the presence of the symmetry axiom, either of these cases implies the
general case, by induction.  For example, Faddeev used the first 
case, whose asymmetry forced him to add the symmetry axiom; but that
can be avoided by assuming the chain rule in its general form.

The operation $\partial \from a \mapsto (a - a^p)/p$ mentioned above is
basic in the theory of $p$-derivations (as in Buium~\cite{BuiuDCA}), which are
themselves closely related to Frobenius lifts and the Adams operations on
$K$-theory (as in Joyal~\cite{JoyDAL}).

One can speculate about extending the theory of entropy to fields other
than $\R$ and $\Zp$, and in particular to the $p$-adic numbers.  (The
$p$-adic entropy of Deninger~\cite{DenipEp} is of a different nature.)
Again, there may be a connection with the information cohomology of Baudot,
Bennequin and Vigneaux, which takes place over an arbitrary field.

\paragraph{Convention}
Throughout, $p$ denotes a prime number, possibly $2$.

\paragraph{Acknowledgements} 
I thank James Borger, Herbert Gangl and Todd Trimble for enlightening
conversations.

\section{Logarithms and derivations}
\lbl{sec:log-der}

Real entropy is a kind of higher logarithm, in the senses that it has the
multiplication-to-addition property
\begin{align}
H(\ppi \otimes \ggamma)
= 
H(\ppi) + H(\ggamma)
\end{align}
(in notation defined at the end of Section~\ref{sec:properties}), and that
when restricted to uniform distributions, it is the logarithm function
itself:
\begin{align}
H(1/n, \ldots, 1/n) = \log n.
\end{align}
To find the right definition of entropy mod~$p$, we therefore begin by
considering mod $p$ notions of logarithm.

Lagrange's theorem immediately implies that there is no logarithm mod $p$,
in that the only homomorphism from the multiplicative group $(\Zp)^\times$
to the additive group $\Zp$ is trivial.  However, there is a substitute.
For an integer $a$ not divisible by $p$, the \demph{Fermat quotient} of $a$
mod~$p$ is the integer
\begin{align}
\fq{p}(a) = \frac{a^{p - 1} - 1}{p}.
\end{align}
We usually regard $\fq{p}(a)$ as an element of $\Zp$.  
Eisenstein~\cite{Eise} observed:

\begin{lemma}
\lbl{lemma:fq-elem}
The map $\fq{p} \from \{ n \in \Z \such p \ndvd n \} \to \Zp$ has the
following properties:
\begin{enumerate}
\item 
\lbl{part:fq-elem-log} 
$\fq{p}(mn) = \fq{p}(m) + \fq{p}(n)$ for all $m, n \in
\Z$ not divisible by~$p$, and $\fq{p}(1) = 0$;

\item
\lbl{part:fq-elem-shift}
$\fq{p}(n + rp) = \fq{p}(n) - r/n$ for all $n, r \in \Z$ with $n$ not
divisible by $p$;

\item
\lbl{part:fq-elem-per}
$\fq{p}(n + p^2) = \fq{p}(n)$ for all $n \in \Z$ not divisible by $p$.
\end{enumerate}
\end{lemma}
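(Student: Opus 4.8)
The plan is to prove all three parts by direct computation, the one structural point being that since $\fq{p}$ is defined by dividing $a^{p-1}-1$ by $p$ and then reducing modulo~$p$, I must compute the relevant numerators modulo $p^2$ in order to pin down the quotient modulo~$p$. Part~\ref{part:fq-elem-log} is the easiest: $\fq{p}(1) = 0$ is immediate from the definition, and for multiplicativity I would write $m^{p-1} = 1 + p\alpha$ and $n^{p-1} = 1 + p\beta$, where $\alpha, \beta$ are integers with $\alpha \equiv \fq{p}(m)$ and $\beta \equiv \fq{p}(n)$ modulo~$p$. Multiplying gives $(mn)^{p-1} = 1 + p(\alpha + \beta) + p^2\alpha\beta$, and dividing by $p$ leaves $\fq{p}(mn) = \alpha + \beta + p\alpha\beta$; the last term vanishes modulo~$p$, so $\fq{p}(mn) \equiv \alpha + \beta$ as required.

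For part~\ref{part:fq-elem-shift} I would expand $(n + rp)^{p-1}$ by the binomial theorem. Every term involving $(rp)^k$ with $k \geq 2$ is divisible by $p^2$, so modulo $p^2$ only the first two survive:
\begin{align*}
(n + rp)^{p-1} \equiv n^{p-1} + (p-1) r p\, n^{p-2} \pmod{p^2}.
\end{align*}
Subtracting $1$ and dividing by $p$ gives $\fq{p}(n + rp) \equiv \fq{p}(n) + (p-1) r n^{p-2} \pmod{p}$. I would then simplify the correction term using $p - 1 \equiv -1$ and, via Fermat's little theorem, $n^{p-2} \equiv n^{-1} \pmod{p}$, turning it into $-r/n$ exactly as claimed.

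Part~\ref{part:fq-elem-per} then falls out as the special case $r = p$ of part~\ref{part:fq-elem-shift}, since the correction term becomes $-p/n \equiv 0 \pmod{p}$. No part presents a genuine obstacle; the only thing demanding care is the bookkeeping of which powers of $p$ survive---that is, consistently truncating modulo $p^2$ before dividing by $p$---and invoking Fermat's little theorem at the right moment to pass from $n^{p-2}$ to $n^{-1}$.
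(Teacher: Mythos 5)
Your proof is correct and is exactly the "elementary calculations using Fermat's little theorem" that the paper's one-line proof alludes to: you use Fermat's little theorem to write $m^{p-1}=1+p\alpha$ for multiplicativity, and the binomial expansion truncated modulo $p^2$ together with $n^{p-2}\equiv n^{-1}$ for the shift formula, with periodicity as the case $r=p$. No gaps; this matches the paper's intended argument.
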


\begin{proof}
Elementary calculations using Fermat's little theorem.
\end{proof}

The lemma implies that $\fq{p}$ defines a group homomorphism
\begin{align}
\fq{p} \from (\Zps)^\times \to \Zp.
\end{align}
It is surjective, since by
the lemma again, it has a section $r \mapsto 1 - rp$.

The Fermat quotient is the closest approximation to a logarithm mod~$p$, in
the sense that although there is no nontrivial group homomorphism
$(\Zp)^\times \to \Zp$, it is a homomorphism $(\Zps)^\times \to \Zp$.  It
is essentially unique as such:

\begin{propn}
\lbl{propn:fq-mult}
Every group homomorphism $(\Zps)^\times \to \Zp$ is a scalar multiple of
the Fermat quotient.
\end{propn}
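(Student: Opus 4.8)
The plan is to exploit the homomorphic section of the Fermat quotient furnished by Lemma~\ref{lemma:fq-elem}, together with a counting argument resting on the coprimality of $p$ and $p - 1$. First I would record that $(\Zps)^\times$ has order $\phi(p^2) = p(p - 1)$, and that by the lemma $\fq{p} \from (\Zps)^\times \to \Zp$ is a \emph{surjective} homomorphism, so its kernel $N$ has order $p - 1$. Moreover $r \mapsto 1 - rp$ is a homomorphic section: it is multiplicative modulo $p^2$ since $(1 - rp)(1 - sp) \equiv 1 - (r + s)p \pmod{p^2}$, and part~\ref{part:fq-elem-shift} of the lemma gives $\fq{p}(1 - rp) = r$. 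Hence in fact $(\Zps)^\times \iso N \times \Zp$, though for the argument only the order of $N$ is needed.

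Now let $f \from (\Zps)^\times \to \Zp$ be an arbitrary homomorphism. The key observation is that $f$ must vanish on $N$: the image $f(N)$ is a subgroup of $\Zp$ whose order divides both $\mg{N} = p - 1$ and $p$, hence is $1$. Consequently $f$ factors through the quotient $(\Zps)^\times/N$, which $\fq{p}$ identifies isomorphically with $\Zp$. Writing $g$ for the induced endomorphism of $\Zp$, we obtain $f = g \of \fq{p}$; and since every endomorphism of $\Zp$ is multiplication by a scalar $c \in \Zp$, this reads $f = c \cdot \fq{p}$, as required.

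The only point demanding care is the vanishing of $f$ on $N$, and this is exactly where the coprimality of $\mg{N} = p - 1$ with $p$ enters decisively; everything else is formal. An equivalent way to package the argument is to note that $\mathrm{Hom}((\Zps)^\times, \Zp)$ is a vector space over $\Zp$, that any homomorphism kills $p$th powers and so this space equals $\mathrm{Hom}\bigl((\Zps)^\times/((\Zps)^\times)^p,\, \Zp\bigr)$, and that the displayed quotient is cyclic of order $p$; the Fermat quotient, being nonzero by surjectivity, then spans this one-dimensional space. I expect no genuine obstacle beyond keeping the case $p = 2$ in view, where $(\Zps)^\times$ has order $2$ and the statement is immediate.
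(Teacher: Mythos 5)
Your proof is correct, but it takes a genuinely different route from the paper. The paper's proof leans on the classical fact that $(\Zps)^\times$ is cyclic (the existence of a primitive root mod $p^2$, quoted from Apostol): picking a generator $e$, any homomorphism $\phi$ is determined by $\phi(e)$, and since $\fq{p}(e)$ generates the image of the surjection $\fq{p}$ it is nonzero, so $c = \phi(e)/\fq{p}(e)$ does the job in one line. You instead avoid cyclicity entirely: you use only that $\fq{p}$ is surjective (so its kernel $N$ has order $p-1$ by the first isomorphism theorem), that any homomorphic image of $N$ in $\Zp$ has order dividing both $p-1$ and $p$ and is therefore trivial, and that every endomorphism of $\Zp$ is a scalar. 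What your approach buys is elementariness --- Lagrange plus the coprimality of $p$ and $p-1$ replaces the primitive root theorem --- and it generalizes readily to any finite abelian group whose Sylow $p$-subgroup is cyclic of order $p$. What the paper's approach buys is brevity, at the cost of citing a deeper input. One small remark: your explicit verification that $r \mapsto 1 - rp$ is a homomorphic section, and hence that $(\Zps)^\times \iso N \times \Zp$, is a pleasant aside but, as you note, not needed; the order count for $N$ already follows from surjectivity of $\fq{p}$, which the paper has established.
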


\begin{proof}
This follows from the standard fact that the group $(\Zps)^\times$ is cyclic
(Theorem~10.6 of Apostol~\cite{AposIAN}, for instance), together with the
observation that $\fq{p}$ is nontrivial (being surjective).  Indeed, let $e$
be a generator of $(\Zps)^\times$; then given
$\phi \from (\Zps)^\times \to \Zp$, we have $\phi = c\fq{p}$ where $c =
\phi(e)/\fq{p}(e) \in \Zp$. 
\end{proof}

Our characterization theorem for entropy mod~$p$ will use the following
characterization of the Fermat quotient.

\begin{propn}
\lbl{propn:fq-char}
Let $f \from \{ n \in \nat \such p \ndvd n \} \to \Zp$ be a function.  The
following are equivalent:
\begin{enumerate}
\item 
\lbl{part:fq-char-condns}
$f(mn) = f(m) + f(n)$ and $f(n + p^2) = f(n)$ for all $m, n \in \nat$ not
divisible by $p$;

\item
\lbl{part:fq-char-form}
$f = c\fq{p}$ for some $c \in \Zp$.
\end{enumerate}
\end{propn}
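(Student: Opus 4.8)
The plan is to reduce the whole statement to the classification of homomorphisms $(\Zps)^\times \to \Zp$ already obtained in Proposition~\ref{propn:fq-mult}. The implication \bref{part:fq-char-form}$\Rightarrow$\bref{part:fq-char-condns} is immediate: if $f = c\fq{p}$, then the multiplicativity $f(mn) = f(m) + f(n)$ and the periodicity $f(n + p^2) = f(n)$ follow at once from parts~\ref{part:fq-elem-log} and~\ref{part:fq-elem-per} of Lemma~\ref{lemma:fq-elem}, since multiplying through by the scalar $c \in \Zp$ preserves both an additive relation and periodicity mod~$p^2$. So all the content lies in the converse.

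For \bref{part:fq-char-condns}$\Rightarrow$\bref{part:fq-char-form}, the guiding observation is that the two hypotheses together say precisely that $f$ descends to a \emph{group homomorphism} on $(\Zps)^\times$. First I would use periodicity to show that $f(n)$ depends only on the residue of $n$ modulo $p^2$: iterating $f(n + p^2) = f(n)$ gives $f(n + kp^2) = f(n)$ for all $k \geq 0$, so any two positive integers coprime to $p$ that are congruent mod $p^2$ share the same $f$-value. Since every unit of $\Zps$ is represented by some positive integer coprime to $p$, this produces a well-defined function $\bar{f} \from (\Zps)^\times \to \Zp$ with $\bar f(\bar n) = f(n)$.

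Next I would verify that $\bar f$ is a homomorphism. Given units $u, v \in (\Zps)^\times$, choose positive representatives $m, n$ coprime to $p$; then $mn$ is again a positive integer coprime to $p$ and represents $uv$, so multiplicativity gives $\bar f(uv) = f(mn) = f(m) + f(n) = \bar f(u) + \bar f(v)$. Thus $\bar f$ is a group homomorphism $(\Zps)^\times \to \Zp$, and Proposition~\ref{propn:fq-mult} yields $\bar f = c\,\bar{\fq{p}}$ for some $c \in \Zp$, where $\bar{\fq{p}}$ is the homomorphism induced by the Fermat quotient (which factors through $(\Zps)^\times$ by part~\ref{part:fq-elem-per} of Lemma~\ref{lemma:fq-elem}). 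Precomposing both sides with the reduction map $n \mapsto \bar n$ recovers $f = c\fq{p}$ on the original domain.

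The only real obstacle is the bookkeeping of this descent: I must take care never to evaluate $f$ outside its domain of positive integers coprime to $p$, which is exactly why it matters that products and the chosen representatives stay positive and prime to $p$. Once the descent is arranged so that the two conditions in \bref{part:fq-char-condns} translate into well-definedness and the homomorphism property respectively, the proposition becomes essentially a restatement of Proposition~\ref{propn:fq-mult}, and no deeper difficulty remains.
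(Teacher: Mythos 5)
Your proof is correct and follows the same route as the paper, which simply states that the converse ``follows from Proposition~\ref{propn:fq-mult}''; you have filled in the intended descent argument (periodicity gives well-definedness on $(\Zps)^\times$, multiplicativity gives the homomorphism property) explicitly and accurately.
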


\begin{proof}
Since $f = \fq{p}$ satisfies the conditions
in~\bref{part:fq-char-condns}, so does any constant multiple.
Hence~\bref{part:fq-char-form} implies~\bref{part:fq-char-condns}.
The converse follows from Proposition~\ref{propn:fq-mult}.
\end{proof}

The entropy of a real probability distribution $\ppi = (\pi_1, \ldots,
\pi_n)$ is 
\begin{align}
H_\R(\ppi) = \sum_{i = 1}^n \partial_\R(\pi_i),
\end{align}
where
\begin{align}
\lbl{eq:real-deriv}
\partial_\R(x) = 
\begin{cases}
-x\log x        &\text{if } x > 0,   \\
0               &\text{if } x = 0.
\end{cases}
\end{align}
The operator $\partial_\R$ is a nonlinear derivation, in the sense that
\begin{align}
\partial_\R(xy) = \partial_\R(x)y + x\partial_\R(y),
\qquad
\partial_\R(1) = 0.
\end{align}
In particular, $\partial_\R(\sum \pi_i) = 0$.  The entropy of $\ppi$
therefore measures the failure of the nonlinear operator $\partial_\R$ to
preserve the sum $\sum \pi_i$:
\begin{align}
\lbl{eq:real-ent-nonlin}
H_\R(\ppi) = 
\sum_{i = 1}^n \partial_\R(\pi_i)
- \partial_\R \Biggl( \sum_{i = 1}^n \pi_i \Biggr).
\end{align}
We will define entropy mod~$p$ in such a way that the analogue of this
equation holds.

The mod~$p$ analogue of $\partial_\R$ is the function $\partial_p \from \Z
\to \Z$ defined by
\begin{align}
\partial_p(a) 
= 
\frac{a - a^p}{p}
=
\begin{cases}
-a \fq{p}(a)    &\text{if } p \ndvd a,  \\
a/p             &\text{if } p \dvd a.
\end{cases}
\end{align}
We usually abbreviate $\partial_p$ to $\partial$, and treat
$\partial(a)$ as an integer mod $p$.  Evidently the element $\partial(a)$
of $\Zp$ depends only on the residue class of $a$ mod~$p^2$, so we can also
view $\partial$ as a function $\Zps \to \Zp$.

\begin{lemma}
\lbl{lemma:p-deriv-elem}
$\partial(ab) \equiv \partial(a)b + a\partial(b) \pmod{p}$ for all $a, b
\in \Z$, and $\partial(1) = 0$.
\end{lemma}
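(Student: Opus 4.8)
The plan is to prove an exact identity over $\Z$ and then reduce it modulo $p$. First, the claim $\partial(1) = 0$ is immediate from the definition, since $\partial(1) = (1 - 1^p)/p = 0$.

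For the Leibniz-type rule I would work directly with the defining formula $\partial(a) = (a - a^p)/p$, rather than with the Fermat-quotient description, since the former is a single expression valid for all $a \in \Z$ (no case split on divisibility by $p$). The key step is to compute the discrepancy $\partial(ab) - \partial(a)b - a\partial(b)$ over $\Z$. Clearing the common denominator $p$ and expanding, the numerator simplifies to $-(a^p - a)(b^p - b)$; dividing by $p$ and using $a^p - a = -p\,\partial(a)$ and $b^p - b = -p\,\partial(b)$, this rearranges into the clean exact identity
\[
\partial(ab) = \partial(a)\,b + a\,\partial(b) - p\,\partial(a)\partial(b),
\]
which is precisely the $p$-derivation identity in the sense of Buium.

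With this identity in hand the lemma follows at once: the correction term $-p\,\partial(a)\partial(b)$ is $p$ times an integer (recall that $\partial$ takes integer values, by Fermat's little theorem), and hence vanishes mod $p$, leaving $\partial(ab) \equiv \partial(a)b + a\partial(b) \pmod{p}$.

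The only genuine content---what I would treat as the crux rather than a true obstacle---is verifying that the discrepancy is honestly divisible by $p$, equivalently that the displayed identity is one of \emph{integers} and not merely of rationals. This rests on the factorization of the numerator as $(a^p - a)(b^p - b)$: by Fermat's little theorem each factor is divisible by $p$, so their product is divisible by $p^2$, and dividing by the single $p$ in the definition of $\partial$ still leaves a factor of $p$. I would check the expansion carefully, since the sign and the exact combination of the cross terms $a^p b$, $a b^p$, $ab$, $a^p b^p$ is the one place where an arithmetic slip could hide; everything else is purely formal.
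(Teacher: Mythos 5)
Your proof is correct and follows essentially the same route as the paper, which simply cites Fermat's little theorem: the factorization of the discrepancy as $-(a^p-a)(b^p-b)$, with each factor divisible by $p$, is exactly the ``elementary consequence'' the paper has in mind. The exact identity $\partial(ab)=\partial(a)b+a\partial(b)-p\,\partial(a)\partial(b)$ you isolate is a nice bonus but not a different method.
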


\begin{proof}
This is an elementary consequence of Fermat's little theorem.
\end{proof}

\section{The definition of entropy}
\lbl{sec:defn-ent}

For $n \geq 1$, write
\begin{align}
\Pi_n
=
\{ \ppi \in (\Zp)^n \such \pi_1 + \cdots + \pi_n = 1\}.
\end{align}
An element of $\Pi_n$ will be called a \demph{probability distribution
  mod~$p$}, or simply a \demph{distribution}.  We will define the entropy
$H_p(\ppi) \in \Zp$ of any such distribution.

A standard elementary lemma will be repeatedly
useful:

\begin{lemma}
\lbl{lemma:pps}
Let $a, b \in \Z$.  If $a \equiv b \pmod{p}$ then $a^p \equiv b^p
\pmod{p^2}$. 
\end{lemma}

\begin{proof}
Write $b = a + rp$ and expand using the binomial theorem.
\end{proof}

The observations at the end of Section~\ref{sec:log-der} suggest defining
entropy mod~$p$ by the analogue of equation~\eqref{eq:real-ent-nonlin},
replacing $\partial_\R$ by $\partial_p$.  In principle this is impossible,
as $\partial_p$ is only well-defined on congruence classes mod~$p^2$,
not mod~$p$.  Thus, for $\pi_i \in \Zp$, the term $\partial_p(\pi_i)$ is
not well-defined.  Nevertheless, the strategy can be implemented:

\begin{lemma}
\lbl{lemma:ent-partial}
For all $n \geq 1$ and $a_1, \ldots, a_n \in \Z$ such that $\sum a_i \equiv
1 \pmod{p}$,
\begin{align}
\sum_{i = 1}^n \partial(a_i) - \partial \Biggl( \sum_{i = 1}^n a_i \Biggr)
\equiv
\frac{1}{p} \Biggl( 1 - \sum_{i = 1}^n a_i^p \Biggr)
\pmod{p}.
\end{align}
\end{lemma}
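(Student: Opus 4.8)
The plan is to prove the congruence by direct computation, expanding the left-hand side using the definition $\partial(a) = (a - a^p)/p$ and then reducing the whole statement to a single mod-$p^2$ fact supplied by Lemma~\ref{lemma:pps}. The only subtlety is bookkeeping about which congruences hold mod $p$ and which hold mod $p^2$, since we must divide by $p$ at the end; the genuine content lives entirely in that one lemma.

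First I would abbreviate $b = \sum_{i=1}^n a_i$ and substitute the definition into each term. Since $\partial(a_i) = (a_i - a_i^p)/p$ and $\partial(b) = (b - b^p)/p$ (each numerator divisible by $p$ by Fermat's little theorem, so these are genuine integers), the left-hand side is
\begin{align*}
\sum_{i=1}^n \partial(a_i) - \partial(b)
&=
\frac{1}{p}\Biggl( \sum_{i=1}^n a_i - \sum_{i=1}^n a_i^p \Biggr)
- \frac{1}{p}\bigl( b - b^p \bigr) \\
&=
\frac{1}{p}\Biggl( b^p - \sum_{i=1}^n a_i^p \Biggr),
\end{align*}
where the linear terms cancel because $b = \sum_i a_i$.

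Next I would compare this with the target expression $\frac{1}{p}\bigl(1 - \sum_i a_i^p\bigr)$. The two differ by exactly $\frac{1}{p}(b^p - 1)$, so it suffices to show that $(b^p - 1)/p \equiv 0 \pmod{p}$, i.e.\ that $b^p \equiv 1 \pmod{p^2}$. This is precisely where the hypothesis enters: we are told $\sum_i a_i \equiv 1 \pmod{p}$, that is, $b \equiv 1 \pmod{p}$. Applying Lemma~\ref{lemma:pps} with the pair $b, 1$ upgrades this to $b^p \equiv 1^p = 1 \pmod{p^2}$, which is exactly the needed statement.

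The main obstacle — such as it is — is the legitimacy of dividing the error term $b^p - 1$ by $p$ and still landing in the right residue class. Fermat's little theorem only guarantees $b^p \equiv 1 \pmod{p}$, which controls the error before division but not after; it is Lemma~\ref{lemma:pps} that provides the extra factor of $p$ needed to make $(b^p - 1)/p$ vanish mod $p$. Everything else is straightforward cancellation, so I expect the proof to be short once the lemma is invoked at the right place.
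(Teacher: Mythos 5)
Your proof is correct and follows essentially the same route as the paper's: both expand $\partial$ via its definition, cancel the linear terms, and reduce the claim to $\bigl(\sum a_i\bigr)^p \equiv 1 \pmod{p^2}$, which is supplied by Lemma~\ref{lemma:pps}. The only cosmetic difference is that the paper clears the factor of $p$ and states the whole computation as a congruence mod $p^2$, while you carry the division by $p$ through each step.
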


\begin{proof}
The right-hand side is an integer, since $\sum a_i^p \equiv
\sum a_i \equiv 1 \pmod{p}$.  The lemma is equivalent to the congruence
\begin{align}
\sum (a_i - a_i^p) - \biggl\{ \sum a_i - \Bigl( \sum a_i \Bigr)^p \biggr\}
\equiv
1 - \sum a_i^p
\pmod{p^2}.
\end{align}
Cancelling, this reduces to
\begin{align}
\Bigl( \sum a_i \Bigr)^p \equiv 1 \pmod{p^2}.
\end{align}
But $\sum a_i \equiv 1 \pmod{p}$, so $\bigl( \sum a_i \bigr)^p \equiv 1
\pmod{p^2}$ by Lemma~\ref{lemma:pps}.
\end{proof}

\begin{defn}
\lbl{defn:ent-p} 
Let $n \geq 1$ and $\ppi \in \Pi_n$.  The \demph{entropy} of $\ppi$ is
\begin{align}
H_p(\ppi) 
= 
\frac{1}{p} \biggl( 1 - \sum_{i = 1}^n a_i^p \biggr)
\in \Zp,
\end{align}
where $a_i \in \Z$ represents $\pi_i \in \Zp$.  We often abbreviate $H_p$ to
$H$. 
\end{defn}

Lemma~\ref{lemma:pps} guarantees that the definition is independent of the
choice of representatives $a_1, \ldots, a_n$, and
Lemma~\ref{lemma:ent-partial} gives
\begin{align}
\lbl{eq:H-partial}
H_p(\ppi) = \sum \partial_p(a_i) - \partial_p \Bigl( \sum a_i \Bigr),
\end{align}
as in the real case (equation~\eqref{eq:real-ent-nonlin}).  But in contrast
to the real case, the term $\partial_p\bigl(\sum a_i\bigr)$ is not always
zero, and if it were omitted then the right-hand side would no longer be
independent of the choice of integers $a_i$.

\begin{example}
\lbl{eg:ent-p-ufm}
Let $n \geq 1$ with $p \ndvd n$.  Then there is a \demph{uniform
  distribution}
\begin{align}
\vc{u}_n = (\underbrace{1/n, \ldots, 1/n}_n) \in \Pi_n.
\end{align}
Choose $a \in \Z$ representing $1/n \in \Zp$.  By
equation~\eqref{eq:H-partial} and then the derivation property of
$\partial$,
\begin{align}
H_p(\vc{u}_n)
=
n\partial(a) - \partial(na)
=
-a\partial(n).
\end{align}
But $\partial(n) = -n\fq{p}(n)$, so $H_p(\vc{u}_n) = \fq{p}(n)$.  This
result over $\Zp$ is analogous to the formula $H_\R(\vc{u}_n) = \log n$ for
the real entropy of a uniform distribution.
\end{example}

\begin{example}
\lbl{eg:ent-p-2}
Let $p = 2$.  For $\ppi \in \Pi_n$, write $\supp(\ppi) = \{ i \such \pi_i
\neq 0\}$, which has odd cardinality since $\sum \pi_i = 1$.  Directly from
the definition of entropy, $H(\ppi) \in \Z/2\Z$ is given by
\begin{align}
H(\ppi) 
= 
\hlf\bigl(\mg{\supp(\ppi)} - 1\bigr)   
=
\begin{cases}
0       &\text{if } \mg{\supp(\ppi)} \equiv 1 \!\!\!\!\pmod{4}, \\
1       &\text{if } \mg{\supp(\ppi)} \equiv 3 \!\!\!\!\pmod{4}.
\end{cases}
\end{align}
\end{example}

In preparation for the next example, we record a standard lemma:

\begin{lemma}
\lbl{lemma:p-binom}
$\binom{p - 1}{s} \equiv (-1)^s \pmod{p}$ for all $s \in \{0, \ldots, p -
  1\}$. 
\end{lemma}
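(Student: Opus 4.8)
The plan is to compute $\binom{p-1}{s}$ directly from its product formula and reduce modulo $p$ factor by factor. Starting from the integer identity
\begin{align}
s!\,\binom{p-1}{s} = (p-1)(p-2)\cdots(p-s),
\end{align}
I would reduce both sides mod $p$. Since $p - k \equiv -k \pmod{p}$ for each $k \in \{1, \ldots, s\}$, the right-hand side becomes $(-1)(-2)\cdots(-s) = (-1)^s\, s! \pmod{p}$, so that
\begin{align}
s!\,\binom{p-1}{s} \equiv (-1)^s\, s! \pmod{p}.
\end{align}
Cancelling $s!$ then yields the claim. The cancellation is legitimate because $s \leq p - 1 < p$, so no factor of $s!$ is divisible by the prime $p$ and hence $s!$ is invertible in $\Zp$; the degenerate case $s = 0$ is handled by reading both products as empty, giving $\binom{p-1}{0} = 1 = (-1)^0$.

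An alternative, and perhaps even shorter, route is induction on $s$ via Pascal's rule. The base case $s = 0$ is immediate. For the inductive step, I would combine the identity $\binom{p}{s} = \binom{p-1}{s-1} + \binom{p-1}{s}$ with the standard divisibility $p \dvd \binom{p}{s}$ (valid for $1 \leq s \leq p - 1$) to get $\binom{p-1}{s} \equiv -\binom{p-1}{s-1} \pmod{p}$; each step flips the sign, so the inductive hypothesis propagates the factor $(-1)^s$.

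I do not expect any genuine obstacle, as this is a classical elementary fact stated here only as a tool for the next example. The single point deserving a word of care is the invertibility of $s!$ modulo $p$ in the first approach---or, equivalently, the divisibility $p \dvd \binom{p}{s}$ in the second---both of which follow solely from the primality of $p$ together with the bound $s < p$.
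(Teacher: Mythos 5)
Your first argument is exactly the paper's proof: write $\binom{p-1}{s} = (p-1)\cdots(p-s)/s!$, reduce each factor $p-k$ to $-k$ mod $p$, and cancel $s!$ (which you correctly justify as invertible since $s < p$). The proposal is correct and takes essentially the same approach; the Pascal-rule induction you sketch as an alternative is also valid but not needed.
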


\begin{proof}
$\binom{p - 1}{s} = 
\frac{(p - 1) \cdots (p - s)}{s!} \equiv 
\frac{(-1)^s s!}{s!} = 
(-1)^s \pmod{p}$.  
\end{proof}

\begin{example}
\lbl{eg:p-bin} 
We compute the entropy of a distribution $(\pi, 1 -
\pi)$ on two elements.  Choose $a \in \Z$ representing $\pi \in \Zp$.
Directly from the definition of entropy, and assuming that $p \neq 2$,
\begin{align}
H(\pi, 1 - \pi)
=
\frac{1}{p} \bigl( 1 - a^p - (1 - a)^p \bigr)
=
\sum_{r = 1}^{p - 1} (-1)^{r + 1} \frac{1}{p} \binom{p}{r} a^r.
\end{align}
But $\tfrac{1}{p} \binom{p}{r} = \tfrac{1}{r} \binom{p - 1}{r - 1}$, so by
Lemma~\ref{lemma:p-binom}, the coefficient of $a^r$ in the sum is $1/r$.
Hence
\begin{align}
H(\pi, 1 - \pi) = \sum_{r = 1}^{p - 1} \frac{\pi^r}{r}.
\end{align}
The function on the right-hand side was the starting point of Kontsevich's
note~\cite{KontOHL}, and we return to it in Section~\ref{sec:poly-ids}.  
In the case $p = 2$, we have $H(\pi, 1 - \pi) = 0$ for both values of $\pi
\in \Ztwo$.
\end{example}

\begin{example}
Appending zero probabilities to a distribution does not change its entropy:
\begin{align}
H(\pi_1, \ldots, \pi_n, 0, \ldots, 0) = H(\pi_1, \ldots, \pi_n).
\end{align}
A subtlety of distributions mod~$p$, absent in the standard real setting,
is that nonzero `probabilities' can sum to zero. But in general, when $\sum
\tau_j = 0$,
\begin{align}
H(\pi_1, \ldots, \pi_n, \tau_1, \ldots, \tau_m) 
\neq
H(\pi_1, \ldots, \pi_n).
\end{align}
For example, when $p = 3$, $\ppi = (1)$ and $\ttau = (1, 1, 1)$, 
Example~\ref{eg:ent-p-ufm} gives 
\begin{align}
H(1, 1, 1, 1) = \fq{3}(4) = -1 \neq 0 = H(1).
\end{align}
\end{example}

\section{The chain rule}
\lbl{sec:properties}

Here we formulate the mod~$p$ version of the chain rule for entropy, which
will later be shown to characterize entropy uniquely up to a constant.

In the Introduction, it was noted that real probability distributions can
be composed in a way that corresponds to performing two random processes in
sequence.  The same formula~\eqref{eq:comp} defines a composition of
probability distributions mod~$p$, where now
\begin{align}
\ppi \in \Pi_n,
\quad
\ggamma^i \in \Pi_{k_i}, 
\quad
\ppi \of (\ggamma^1, \ldots, \ggamma^n) \in \Pi_{k_1 + \cdots + k_n}.
\end{align}
And entropy mod~$p$ satisfies the same chain rule for composition:

\begin{propn}[Chain rule]
\lbl{propn:chn-p}
We have
\begin{align}
\lbl{eq:chn-p-main}
H_p\bigl( \ppi \of (\ggamma^1, \ldots, \ggamma^n) \bigr)
=
H_p(\ppi) + \sum_{i = 1}^n \pi_i H_p(\ggamma^i)
\end{align}
for all $n, k_1, \ldots, k_n \geq 1$, all $\ppi \in \Pi_n$, and all
$\ggamma^i \in \Pi_{k_i}$. 
\end{propn}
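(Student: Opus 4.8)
The plan is to compute both sides directly from Definition~\ref{defn:ent-p} and to reduce the claimed congruence mod~$p$ to a transparent congruence mod~$p^2$. First I would fix integer representatives: let $a_i \in \Z$ represent $\pi_i$ and $b^i_j \in \Z$ represent $\gamma^i_j$, so that $a_i b^i_j$ represents the entry $\pi_i \gamma^i_j$ of the composite distribution $\ppi \of (\ggamma^1, \ldots, \ggamma^n)$. Writing $B_i = \sum_{j=1}^{k_i} (b^i_j)^p$ and using the multiplicativity of $p$th powers, $(a_i b^i_j)^p = a_i^p (b^i_j)^p$, the left-hand side of~\eqref{eq:chn-p-main} becomes
\begin{align*}
H_p\bigl(\ppi \of (\ggamma^1, \ldots, \ggamma^n)\bigr)
= \frac{1}{p}\Bigl(1 - \sum_{i=1}^n a_i^p B_i\Bigr).
\end{align*}

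Next I would expand the right-hand side. Since $H_p(\ggamma^i) = \tfrac{1}{p}(1 - B_i) \in \Zp$ and $\pi_i$ is represented by $a_i$, the product $\pi_i H_p(\ggamma^i)$ is represented by the integer $a_i \cdot \tfrac{1}{p}(1 - B_i)$; here $1 - B_i \equiv 0 \pmod{p}$ because $B_i = \sum_j (b^i_j)^p \equiv \sum_j b^i_j \equiv 1 \pmod{p}$, so this quantity is genuinely an integer. Thus
\begin{align*}
H_p(\ppi) + \sum_{i=1}^n \pi_i H_p(\ggamma^i)
\equiv
\frac{1}{p}\Bigl(1 - \sum_i a_i^p\Bigr) + \sum_i a_i \cdot \frac{1}{p}(1 - B_i)
\pmod{p}.
\end{align*}

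To compare the two expressions I would clear the factor $1/p$: the desired congruence mod~$p$ is equivalent to the statement that $p$ times the left-hand side is congruent to $p$ times the right-hand side mod~$p^2$. After cancelling the leading $1$'s and rearranging, everything collapses to the single congruence
\begin{align*}
\sum_{i=1}^n (a_i^p - a_i)(1 - B_i) \equiv 0 \pmod{p^2}.
\end{align*}
This is the crux, and it is immediate: each factor $a_i^p - a_i$ is divisible by $p$ by Fermat's little theorem, and each factor $1 - B_i$ is divisible by $p$ as noted above, so every summand is divisible by $p^2$.

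The computation is short, so I do not expect a serious obstacle; the one point demanding care is the well-definedness bookkeeping—ensuring that the term $\pi_i H_p(\ggamma^i)$, in which $\pi_i$ lives only in $\Zp$, is lifted to integers correctly, and that the passage from a mod-$p$ identity to a mod-$p^2$ identity is legitimate. The conceptual content is simply that $a_i^p - a_i$ and $1 - B_i$ each carry one factor of $p$, so their product vanishes mod~$p^2$; this is the same mechanism (a product of two quantities divisible by $p$) that underlies the derivation property of $\partial$ in Lemma~\ref{lemma:p-deriv-elem} and the well-definedness of $H_p$ itself.
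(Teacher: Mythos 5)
Your proof is correct, and it takes a recognisably different route from the paper's. The paper's proof works throughout with the operator $\partial_p$: it rewrites each of the three terms of the chain rule via Lemma~\ref{lemma:ent-partial} as $\sum\partial(\,\cdot\,)-\partial(\sum\,\cdot\,)$, applies the Leibniz rule of Lemma~\ref{lemma:p-deriv-elem} to $\partial(a_i b^i_j)$, and uses the device of representing $\pi_i$ by the integer $a_i B^i$ (with $B^i=\sum_j b^i_j$) so that the three expansions visibly cancel. You instead bypass $\partial$ entirely, compute both sides from the raw formula $\tfrac{1}{p}(1-\sum a^p)$ of Definition~\ref{defn:ent-p}, and collapse the whole identity to the single congruence $\sum_i (a_i^p-a_i)(1-B_i)\equiv 0\pmod{p^2}$ with $B_i=\sum_j (b^i_j)^p$, which holds because each factor carries one power of $p$. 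Your bookkeeping is sound: the lift of $\pi_i H_p(\ggamma^i)$ to $a_i\cdot\tfrac1p(1-B_i)$ is legitimate since $B_i\equiv 1\pmod p$, and the passage between a mod-$p$ identity of integers and a mod-$p^2$ identity of their $p$-fold multiples is valid in both directions. What your version buys is self-containedness and a sharply isolated crux; what the paper's version buys is conceptual alignment with the real case (where the chain rule likewise follows from the derivation property of $x\mapsto -x\log x$), and reusability of the $\partial$-calculus elsewhere in the paper. As you note yourself, the two arguments rest on the same mechanism --- a product of two multiples of $p$ vanishing mod $p^2$ --- so the difference is one of packaging rather than substance.
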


\begin{proof}
Write $\ggamma^i = \bigl(\gamma^i_1, \ldots, \gamma^i_{k_i}\bigr)$.  Choose
$a_i \in \Z$ representing $\pi_i \in \Zp$ and $b^i_j \in \Z$ representing
$\gamma^i_j \in \Zp$, for each $i$ and $j$.  Write $B^i = b^i_1 + \cdots +
b^i_{k_i}$.

We evaluate in turn the three terms in~(\ref{eq:chn-p-main}).
First, by Lemma~\ref{lemma:ent-partial} and the derivation property of
$\partial$ (Lemma~\ref{lemma:p-deriv-elem}),
\begin{align}
H\bigl( \ppi \of (\ggamma^1, \ldots, \ggamma^n) \bigr)     &
=
\sum_{i = 1}^n \sum_{j = 1}^{k_i} \partial\bigl(a_i b^i_j\bigr)
-
\partial \Biggl( \sum_{i = 1}^n \sum_{j = 1}^{k_i} a_i b^i_j \Biggr)    \\
&
=
\sum_{i = 1}^n \partial(a_i) B^i 
+ \sum_{i = 1}^n a_i \sum_{j = 1}^{k_i} \partial\bigl(b^i_j\bigr)
-
\partial \Biggl( \sum_{i = 1}^n a_i B^i \Biggr). 
\end{align}
Second, since $\ggamma^i \in \Pi_{k_i}$, we have $B^i \equiv 1 \pmod{p}$, so
$a_i B^i \in \Z$ represents $\ppi_i \in \Zp$.  Hence
\begin{align}
H(\ppi)      &
=
\sum_{i = 1}^n \partial\bigl(a_i B^i\bigr) 
- \partial\Biggl( \sum_{i = 1}^n a_i B^i \Biggr)        \\
&
=
\sum_{i = 1}^n \partial(a_i) B^i
+ \sum_{i = 1}^n a_i \partial(B^i)
- \partial\Biggl( \sum_{i = 1}^n a_i B^i \Biggr).        
\end{align}
Third, 
\begin{align}
\sum_{i = 1}^n \pi_i H(\ggamma^i)       
=
\sum_{i = 1}^n a_i \sum_{j = 1}^{k_i} \partial\bigl(b^i_j\bigr)
- \sum_{i = 1}^n a_i \partial(B^i).
\end{align}
The result follows.
\end{proof}

A special case of composition is the \demph{tensor product} of
distributions, defined for $\ppi \in \Pi_n$ and $\ggamma \in \Pi_k$ by
\begin{align}
\ppi \otimes \ggamma    &
=
\ppi \of (\ggamma, \ldots, \ggamma)     \\
&
=
(\pi_1 \gamma_1, \ldots, \pi_1 \gamma_k,
\ \ldots, \ 
\pi_n \gamma_1, \ldots, \pi_n \gamma_k)
\in 
\Pi_{nk}.
\end{align}
In the analogous case of real distributions, $\ppi \otimes \ggamma$ is the
joint distribution of two independent random variables with distributions
$\ppi$ and $\ggamma$.

The chain rule immediately implies a logarithmic property of entropy
mod~$p$:

\begin{cor}
\lbl{cor:ent-log}
$H(\ppi \otimes \ggamma) = H(\ppi) + H(\ggamma)$ for all $\ppi \in
\Pi_n$ and $\ggamma \in \Pi_k$.
\qed
\end{cor}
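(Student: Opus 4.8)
The plan is to obtain the corollary as an immediate consequence of the chain rule (Proposition~\ref{propn:chn-p}), exploiting the fact that the tensor product is literally a special case of composition. By the definition of $\otimes$ recalled just above, $\ppi \otimes \ggamma = \ppi \of (\ggamma, \ldots, \ggamma)$, with $\ggamma$ appearing $n$ times, where $n$ is the length of $\ppi$.

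First I would apply the chain rule~\eqref{eq:chn-p-main} with $\ggamma^1 = \cdots = \ggamma^n = \ggamma \in \Pi_k$, which gives
\begin{align}
H(\ppi \otimes \ggamma)
=
H(\ppi) + \sum_{i = 1}^n \pi_i H(\ggamma).
\end{align}
Since the factor $H(\ggamma)$ does not depend on the summation index $i$, I would then pull it out of the sum, leaving $H(\ggamma) \sum_{i = 1}^n \pi_i$. Finally, the defining condition $\sum_{i = 1}^n \pi_i = 1$ for membership in $\Pi_n$ collapses this to $H(\ggamma)$, yielding the claimed identity $H(\ppi \otimes \ggamma) = H(\ppi) + H(\ggamma)$.

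There is no genuine obstacle here; the entire argument is bookkeeping on the chain rule, which is why the statement is flagged with \qed rather than given a separate proof. The only point meriting a moment's care is that the identity $\sum \pi_i = 1$ is an equation in $\Zp$, but this is precisely the condition built into the definition of $\Pi_n$, so nothing further is required.
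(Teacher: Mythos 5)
Your proposal is correct and is exactly the paper's (implicit) argument: the corollary is stated with \qed because it follows at once from applying the chain rule with $\ggamma^1 = \cdots = \ggamma^n = \ggamma$ and using $\sum_i \pi_i = 1$, as the paper itself spells out later in the proof of Lemma~\ref{lemma:ent-p-elem}. Nothing further is needed.
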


\section{Unique characterization of entropy}
\lbl{sec:char-ent}

Our main theorem is that up to a constant factor, entropy mod~$p$ is the
only quantity satisfying the chain rule.

\begin{thm}
\lbl{thm:fad-p}
Let $\bigl( I \from \Pi_n \to \Zp \bigr)_{n \geq 1}$ be a sequence of
functions.  The following are equivalent:
\begin{enumerate}
\item 
\lbl{part:fad-p-condns}
$I$ satisfies the chain rule (that is, satisfies the conclusion of
Proposition~\ref{propn:chn-p} with $I$ in place of $H_p$);

\item
\lbl{part:fad-p-form}
$I = cH_p$ for some $c \in \Zp$.
\end{enumerate}
\end{thm}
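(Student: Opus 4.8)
The plan is to prove the nontrivial implication \bref{part:fad-p-condns}$\Rightarrow$\bref{part:fad-p-form}; the converse is immediate, since $H_p$ satisfies the chain rule (Proposition~\ref{propn:chn-p}) and the chain rule is linear in the unknown function. So suppose $I$ satisfies the chain rule. The idea is to reduce everything to the uniform distributions, to pin down the function $\phi(n) := I(\vc{u}_n)$ (defined for $n \geq 1$ with $p \nmid n$) as a multiple of the Fermat quotient via Proposition~\ref{propn:fq-char}, and then to propagate this back to arbitrary distributions.

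First I would dispose of zero probabilities and establish multiplicativity. The chain rule forces $I(\vc{u}_1) = 0$, by applying the tensor-additivity $I(\ppi \otimes \ggamma) = I(\ppi) + I(\ggamma)$ (Corollary~\ref{cor:ent-log}, whose one-line proof uses only the chain rule) to $\vc{u}_1 \otimes \vc{u}_1 = \vc{u}_1$. Computing $I(1,0,0)$ via the two groupings $(1,0)\of((1,0),(1))$ and $(1,0)\of((1),(1,0))$ gives $2I(1,0) = I(1,0)$, hence $I(1,0) = 0$; the symmetric computation of $I(0,0,1)$ gives $I(0,1) = 0$. Feeding $(1,0)$ or $(0,1)$ into a single slot of a composite then shows that inserting a zero at any position leaves $I$ unchanged, so $I(\ppi)$ depends only on the nonzero part of $\ppi$ and we may assume all $\pi_i \neq 0$. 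Applying tensor-additivity to $\vc{u}_m \otimes \vc{u}_n = \vc{u}_{mn}$ gives $\phi(mn) = \phi(m) + \phi(n)$.

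The crux is periodicity, $\phi(n + p^2) = \phi(n)$, which is what replaces the continuity hypothesis of Faddeev's real theorem and is the step I expect to be the main obstacle, since it is not clear a priori that the chain rule alone forces it. Here I would decompose $\vc{u}_{n+p}$ as $\vc{u}_n \of (\vc{u}_1, \ldots, \vc{u}_1, \vc{u}_{1+p})$ with $n-1$ trivial factors, noting that $1/(n+p) = 1/n$ and $1/(1+p) = 1$ in $\Zp$, so that the composite genuinely is $\vc{u}_{n+p}$. The chain rule then yields the shift relation $\phi(n+p) = \phi(n) + \tfrac{1}{n}\epsilon$, with $\epsilon := \phi(1+p)$ independent of $n$. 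Iterating this $p$ times and using $1/(n+lp) = 1/n$ in $\Zp$ gives $\phi(n + p^2) = \phi(n) + \tfrac{p}{n}\epsilon = \phi(n)$, since $p = 0$ in $\Zp$; remarkably, the value of $\epsilon$ never needs to be computed. With multiplicativity and periodicity in hand, Proposition~\ref{propn:fq-char} gives $\phi = c\fq{p}$ for some $c \in \Zp$.

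Finally I would propagate back. For $\ppi$ with all $\pi_i \neq 0$, choose integers $k_i \in \{1, \ldots, p-1\}$ representing $\pi_i$ and set $N = \sum_i k_i$; then $N \equiv 1 \pmod{p}$, so $p \nmid N$, and $\vc{u}_N = \ppi \of (\vc{u}_{k_1}, \ldots, \vc{u}_{k_n})$, whence the chain rule gives $I(\ppi) = \phi(N) - \sum_i \tfrac{k_i}{N}\phi(k_i)$. The identical identity holds for $H_p$ with $\fq{p}$ in place of $\phi$ (Example~\ref{eg:ent-p-ufm} gives $H_p(\vc{u}_n) = \fq{p}(n)$), so substituting $\phi = c\fq{p}$ yields $I(\ppi) = cH_p(\ppi)$. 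Since both $I$ and $H_p$ are unchanged by deleting zero entries, this extends to all distributions, proving $I = cH_p$.
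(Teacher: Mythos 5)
Your proposal is correct, and its overall architecture coincides with the paper's: reduce to the function $\phi(n) = I(\vc{u}_n)$ on uniform distributions, establish multiplicativity and $p^2$-periodicity so that Proposition~\ref{propn:fq-char} gives $\phi = c\fq{p}$, propagate back to distributions with nonzero entries via the identity $\ppi \of (\vc{u}_{k_1}, \ldots, \vc{u}_{k_n}) = \vc{u}_N$ (the paper's Lemma~\ref{lemma:p-ufm-to-gen}), and finish with invariance under inserting zeros (Lemma~\ref{lemma:ent-p-abs}). The one place where you genuinely diverge is the step the paper itself flags as the most delicate, the periodicity $\phi(n+p^2) = \phi(n)$ (Lemma~\ref{lemma:p-per}). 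The paper derives the shift relation $I(\vc{u}_{n+p}) = I(\vc{u}_n) - c/n$ indirectly: it exploits the fact that the mod-$p$ distribution $(n, 1, \ldots, 1)$ admits two systems of integer representatives (with first entry $n$ or $n+p$), applies Lemma~\ref{lemma:p-ufm-to-gen} to each, and compares against an auxiliary modulus $m \equiv 1 \pmod{p}$ to show $n\bigl(I(\vc{u}_{n+p}) - I(\vc{u}_n)\bigr)$ is constant. You instead obtain the same shift relation in one stroke from the composition identity $\vc{u}_{n+p} = \vc{u}_n \of (\vc{u}_1, \ldots, \vc{u}_1, \vc{u}_{1+p})$, which is valid because every entry of both sides equals $1/n = 1/(n+p)$ in $\Zp$; the chain rule then gives $\phi(n+p) = \phi(n) + \phi(1+p)/n$ with the constant explicitly identified as $\phi(1+p)$, and iterating $p$ times kills the correction term. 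Your route is shorter and avoids the auxiliary comparison distribution entirely; the paper's route has the modest advantage of reusing the two-representatives mechanism already set up for Lemma~\ref{lemma:p-ufm-to-gen}, but on this step your argument is the cleaner of the two. The remaining details (the value $H_p(\vc{u}_n) = \fq{p}(n)$ from Example~\ref{eg:ent-p-ufm}, and the harmless replacement of $\pi_i$ by $k_i/N = k_i$ since $N \equiv 1 \pmod{p}$) all check out.
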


Since $H$ satisfies the chain rule, so does any constant multiple.
Hence~\bref{part:fad-p-form} implies~\bref{part:fad-p-condns}.  We now
begin the proof of the converse.  

\emph{For the rest of the proof}, let
$\bigl( I \from \Pi_n \to \Zp \bigr)_{n \geq 1}$ be a sequence of functions
satisfying the chain rule.  Recall that $\vc{u}_n$ denotes the uniform
distribution $(1/n, \ldots, 1/n)$, for $p \ndvd n$.

\begin{lemma}
\lbl{lemma:ent-p-elem}
\begin{enumerate}
\item
\lbl{part:epe-bin}
$I(\vc{u}_{mn}) = I(\vc{u}_m) + I(\vc{u}_n)$ for all $m, n \in \nat$ not
  divisible by $p$;

\item 
\lbl{part:epe-null}
$I(\vc{u}_1) = 0$.
\end{enumerate}
\end{lemma}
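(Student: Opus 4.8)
The plan is to read off both statements directly from the chain rule for $I$, proving part~\bref{part:epe-bin} first and obtaining part~\bref{part:epe-null} as the degenerate case $m = n = 1$. A preliminary point I will record is that, since $p$ is prime and divides neither $m$ nor $n$, it does not divide $mn$ either; hence $\vc{u}_{mn}$ is a genuine element of $\Pi_{mn}$ and the statement even makes sense.

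For part~\bref{part:epe-bin}, the crux is the identity $\vc{u}_m \otimes \vc{u}_n = \vc{u}_{mn}$. This is immediate from the defining formula for $\otimes$: every one of the $mn$ entries of $\vc{u}_m \otimes \vc{u}_n$ equals $(1/m)(1/n) = 1/(mn)$ in $\Zp$. Writing the tensor product as the composite $\vc{u}_m \of (\vc{u}_n, \ldots, \vc{u}_n)$ and feeding it to the chain rule for $I$ (with $\ppi = \vc{u}_m$ and every $\ggamma^i = \vc{u}_n$) yields
\begin{align*}
I(\vc{u}_{mn}) = I(\vc{u}_m) + \sum_{i = 1}^m \tfrac{1}{m} I(\vc{u}_n).
\end{align*}
The summand is independent of $i$, so the sum collapses to $m \cdot \tfrac{1}{m} \cdot I(\vc{u}_n)$; and here $m \cdot \tfrac{1}{m} = 1$ in $\Zp$, precisely because $p \ndvd m$ makes $\tfrac{1}{m}$ a well-defined inverse. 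This gives $I(\vc{u}_{mn}) = I(\vc{u}_m) + I(\vc{u}_n)$. Equivalently, one can observe that the proof of Corollary~\ref{cor:ent-log} used nothing about $H$ beyond the chain rule, so the logarithmic property holds verbatim for $I$; part~\bref{part:epe-bin} is then that property specialized to $\ppi = \vc{u}_m$, $\ggamma = \vc{u}_n$.

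For part~\bref{part:epe-null}, I will simply substitute $m = n = 1$ into the identity just proved. Since $\vc{u}_1 = (1)$ and $\vc{u}_{1 \cdot 1} = \vc{u}_1$, it reads $I(\vc{u}_1) = I(\vc{u}_1) + I(\vc{u}_1)$, forcing $I(\vc{u}_1) = 0$. I anticipate no real obstacle: the whole argument is a direct unwinding of the chain rule, and the only matters needing care are bookkeeping ones --- confirming $p \ndvd mn$, checking that the composite $\vc{u}_m \of (\vc{u}_n, \ldots, \vc{u}_n)$ is literally $\vc{u}_{mn}$, and verifying the coefficient identity $m \cdot \tfrac{1}{m} = 1$ in $\Zp$.
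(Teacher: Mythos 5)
Your proposal is correct and follows essentially the same route as the paper: the paper likewise derives the logarithmic property $I(\ppi \otimes \ggamma) = I(\ppi) + I(\ggamma)$ from the chain rule applied to $\ppi \of (\ggamma, \ldots, \ggamma)$, specializes to $\vc{u}_m \otimes \vc{u}_n = \vc{u}_{mn}$, and obtains part (ii) by taking $m = n = 1$. The bookkeeping points you flag (that $p \ndvd mn$ and that the coefficients sum to $1$) are exactly the right ones and present no difficulty.
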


\begin{proof}
By the chain rule, $I$  has the logarithmic property
\begin{align}
I(\ppi \otimes \ggamma) =
I\bigl(\ppi \of (\ggamma, \ldots, \ggamma)\bigr) =
I(\ppi) + I(\ggamma)
\end{align}
for all $\ppi \in \Pi_m$ and $\ggamma \in \Pi_n$.  In particular,
for all $m, n$ not divisible by $p$,
\begin{align}
I(\vc{u}_{mn}) 
= 
I(\vc{u}_m \otimes \vc{u}_n) 
=
I(\vc{u}_m) + I(\vc{u}_n),
\end{align}
proving~\bref{part:epe-bin}.  For~\bref{part:epe-null}, take $m = n = 1$
in~\bref{part:epe-bin}.
\end{proof}

\begin{lemma}
\lbl{lemma:fad-10}
$I(1, 0) = I(0, 1) = 0$.
\end{lemma}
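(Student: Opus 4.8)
The plan is to realize the distribution $(1, 0) \in \Pi_2$ as a factor of a composite in $\Pi_3$ in two different ways, and to compare the two values that the chain rule assigns to the resulting entropy. Since the weights $\pi_i$ attached to the inner distributions differ between the two bracketings, the comparison will force a relation that pins down $I(1, 0)$. This is really just exploiting the associativity of operadic composition: the same composite distribution is produced by two distinct nestings, and the two chain-rule evaluations must agree.

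First I would compute $I(1, 0, 0)$ from the composite $(1, 0) \of \bigl( (1, 0), (1) \bigr) = (1, 0, 0)$. Here the weight on the inner $(1, 0)$ is $\pi_1 = 1$ and the weight on $(1)$ is $\pi_2 = 0$, so the chain rule (Proposition~\ref{propn:chn-p}, applied to $I$) gives
\begin{align}
I(1, 0, 0) = I(1, 0) + 1 \cdot I(1, 0) + 0 \cdot I(1) = 2\,I(1, 0).
\end{align}
Next I would recompute the same entropy from the other bracketing $(1, 0) \of \bigl( (1), (1, 0) \bigr) = (1, 0, 0)$, in which the weights are reversed, obtaining
\begin{align}
I(1, 0, 0) = I(1, 0) + 1 \cdot I(1) + 0 \cdot I(1, 0) = I(1, 0) + I(1).
\end{align}
Equating the two expressions yields $2\,I(1, 0) = I(1, 0) + I(1)$, hence $I(1, 0) = I(1) = I(\vc{u}_1)$, which is $0$ by Lemma~\ref{lemma:ent-p-elem}\bref{part:epe-null}.

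For $I(0, 1)$ I would run the mirror-image computation, producing $(0, 0, 1) \in \Pi_3$ from the two bracketings $(0, 1) \of \bigl( (0, 1), (1) \bigr)$ and $(0, 1) \of \bigl( (1), (0, 1) \bigr)$. The chain rule then gives $I(0, 1) + I(1)$ from the first and $2\,I(0, 1)$ from the second, so again $I(0, 1) = I(1) = 0$.

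No step is computationally hard here; the only point requiring care is that, because we are deliberately avoiding any symmetry hypothesis on $I$, the vanishing of $I(0, 1)$ cannot simply be inferred from that of $I(1, 0)$, and must be established by its own (symmetric-looking but logically independent) argument. The main thing to get right is the bookkeeping of which weight $\pi_i$ multiplies which inner-distribution entropy, since it is precisely the asymmetry between a weight of $1$ and a weight of $0$ that makes the two evaluations disagree and thereby forces $I(1, 0)$ and $I(0, 1)$ to coincide with the known value $I(1) = 0$.
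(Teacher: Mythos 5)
Your proposal is correct and is essentially the paper's own argument: computing $I(1,0,0)$ (resp.\ $I(0,0,1)$) via the two bracketings $(1,0)\of((1,0),\vc{u}_1)$ and $(1,0)\of(\vc{u}_1,(1,0))$ and comparing, with $(1)=\vc{u}_1$ and $I(\vc{u}_1)=0$ from Lemma~\ref{lemma:ent-p-elem}. The only cosmetic difference is that you carry $I(1)$ symbolically to the end rather than substituting $I(\vc{u}_1)=0$ immediately.
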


\begin{proof}
We compute $I(1, 0, 0)$ in two ways.  On the
one hand, by the chain rule,
\begin{align}
I(1, 0, 0)
=
I\bigl((1, 0) \of \bigl((1, 0), \vc{u}_1\bigr)\bigr)
=
I(1, 0) + 1 \cdot I(1, 0) + 0 \cdot I(\vc{u}_1)
=
2I(1, 0).
\end{align}
On the other, by the chain rule and the fact that $I(\vc{u}_1) = 0$,
\begin{align}
I(1, 0, 0)
=
I\bigl((1, 0) \of \bigl(\vc{u}_1, (1, 0)\bigr)\bigr)
=
I(1, 0) + 1 \cdot I(\vc{u}_1) + 0 \cdot I(1, 0)
=
I(1, 0).
\end{align}
Hence $I(1, 0) = 0$.  The proof that $I(0, 1) = 0$ is similar.
\end{proof}

\begin{lemma}
\lbl{lemma:ent-p-abs}
For all $\ppi \in \Pi_n$ and $i \in \{0, \ldots, n\}$, 
\begin{align}
I(\pi_1, \ldots, \pi_n)
=
I(\pi_1, \ldots, \pi_i, 0, \pi_{i + 1}, \ldots, \pi_n).
\end{align}
\end{lemma}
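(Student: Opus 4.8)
The plan is to realize the distribution with an inserted zero as a composite of $\ppi$ with trivial distributions together with a single copy of $(1,0)$ (or, at the left-hand end, $(0,1)$), and then to read off the conclusion directly from the chain rule, which $I$ satisfies by assumption.

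First I would dispose of the case $1 \leq i \leq n$. Take $\ggamma^i = (1, 0) \in \Pi_2$ and $\ggamma^j = \vc{u}_1 = (1) \in \Pi_1$ for every $j \neq i$. Inspecting the composition formula~\eqref{eq:comp}, the block indexed by $j \neq i$ contributes the single entry $\pi_j$, while the block indexed by $i$ contributes $(\pi_i \cdot 1, \pi_i \cdot 0) = (\pi_i, 0)$; hence
\[
\ppi \of (\ggamma^1, \ldots, \ggamma^n)
=
(\pi_1, \ldots, \pi_i, 0, \pi_{i+1}, \ldots, \pi_n).
\]

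Next I would apply the chain rule to this composite, obtaining
\[
I(\pi_1, \ldots, \pi_i, 0, \pi_{i+1}, \ldots, \pi_n)
=
I(\ppi) + \sum_{j = 1}^n \pi_j I(\ggamma^j).
\]
Every summand vanishes: $I(\ggamma^i) = I(1, 0) = 0$ by Lemma~\ref{lemma:fad-10}, and $I(\ggamma^j) = I(\vc{u}_1) = 0$ for $j \neq i$ by Lemma~\ref{lemma:ent-p-elem}\bref{part:epe-null}. This gives the claimed equality.

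Finally, the remaining case $i = 0$ (a zero prepended at the front) is handled identically, except that there is no block to expand to the left of $\pi_1$; instead I would take $\ggamma^1 = (0, 1)$ and $\ggamma^j = \vc{u}_1$ for $j \geq 2$, so that the composite is $(0, \pi_1, \ldots, \pi_n)$, and finish using $I(0, 1) = 0$ from Lemma~\ref{lemma:fad-10}. I do not expect any genuine obstacle here: the statement is a one-line consequence of the chain rule once the vanishing facts $I(1,0) = I(0,1) = I(\vc{u}_1) = 0$ are in hand, and the only point needing a moment's attention is the boundary index $i = 0$, where the inserted zero must be produced with $(0,1)$ rather than $(1,0)$.
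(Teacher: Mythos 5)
Your proof is correct and is essentially identical to the paper's: the same decomposition $\ppi \of (\vc{u}_1, \ldots, \vc{u}_1, (1,0), \vc{u}_1, \ldots, \vc{u}_1)$ for $i \geq 1$, the same use of the chain rule together with $I(\vc{u}_1) = I(1,0) = 0$, and the same treatment of the boundary case $i = 0$ via $(0,1)$ and $I(0,1) = 0$.
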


\begin{proof}
First suppose that $i \neq 0$.  Then
\begin{align}
(\pi_1, \ldots, \pi_i, 0, \pi_{i + 1}, \ldots, \pi_n)
=
\ppi \of 
\bigl(\underbrace{\vc{u}_1, \ldots, \vc{u}_1}_{i - 1}, (1, 0), 
\underbrace{\vc{u}_1, \ldots, \vc{u}_1}_{n - i}\bigr).
\end{align}
Applying $I$ to both sides, then using the chain rule and $I(\vc{u}_1) = 0
= I(1, 0)$, gives the result.  The case $i = 0$ is proved similarly, 
using $I(0, 1) = 0$.
\end{proof}

We will prove the characterization theorem by analysing $I(\vc{u}_n)$ as
$n$ varies.  The chain rule will allow us to deduce the value of $I(\ppi)$
for more general distributions $\ppi$, thanks to the following lemma.

\begin{lemma}
\lbl{lemma:p-ufm-to-gen}
Let $\ppi \in \Pi_n$ with $\pi_i \neq 0$ for all $i$.  For each $i$, let
$k_i \geq 1$ be an integer representing $\pi_i \in \Zp$, and write $k =
\sum_{i = 1}^n k_i$.  Then 
\begin{align}
I(\ppi) = I(\vc{u}_k) - \sum_{i = 1}^n k_i I(\vc{u}_{k_i}).
\end{align}
\end{lemma}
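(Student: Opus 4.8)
The plan is to recognize the uniform distribution $\vc{u}_k$ as a composite built from $\ppi$ and the smaller uniform distributions $\vc{u}_{k_i}$, and then to read off the claimed formula directly from the chain rule. First I would check that all the uniform distributions in sight are genuinely defined: since $\pi_i \neq 0$, the representative $k_i$ satisfies $p \ndvd k_i$, so $\vc{u}_{k_i} \in \Pi_{k_i}$ makes sense; and since $k = \sum_{i} k_i \equiv \sum_i \pi_i = 1 \pmod{p}$, we have $p \ndvd k$, so $\vc{u}_k \in \Pi_k$ makes sense too.

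The key step is the identity $\ppi \of (\vc{u}_{k_1}, \ldots, \vc{u}_{k_n}) = \vc{u}_k$. To establish it, I would compute the composite directly from \eqref{eq:comp}: its $i$-th block consists of $k_i$ copies of the single value $\pi_i \cdot (1/k_i)$. Because $k_i$ represents $\pi_i$, we have $\pi_i = k_i$ in $\Zp$, so each such entry equals $k_i/k_i = 1$. Hence the composite is the sequence of $k_1 + \cdots + k_n = k$ entries all equal to $1$. On the other hand, $\vc{u}_k$ has $k$ entries each equal to $1/k$, and $1/k = 1$ in $\Zp$ since $k \equiv 1 \pmod{p}$. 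Thus the two distributions agree entry by entry.

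With this identity in hand, applying $I$ to both sides and using the chain rule (Proposition~\ref{propn:chn-p}, which $I$ satisfies by hypothesis) gives $I(\vc{u}_k) = I(\ppi) + \sum_{i = 1}^n \pi_i I(\vc{u}_{k_i})$. Replacing each coefficient $\pi_i$ by its representative $k_i$ — legitimate since $\pi_i = k_i$ in $\Zp$ — and then rearranging yields the stated formula $I(\ppi) = I(\vc{u}_k) - \sum_{i = 1}^n k_i I(\vc{u}_{k_i})$.

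The only real subtlety, and the step I would be most careful about, is the identification of the composite with $\vc{u}_k$: it hinges on the two coincidences $\pi_i/k_i = 1$ and $1/k = 1$ in $\Zp$, both of which reduce to the congruences $k_i \equiv \pi_i$ and $k \equiv 1 \pmod{p}$. Once those are in place, everything else is a direct, bookkeeping-free application of the chain rule, so I do not anticipate any further obstacle.
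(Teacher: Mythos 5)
Your proof is correct and takes essentially the same approach as the paper: both hinge on the identity $\ppi \of (\vc{u}_{k_1}, \ldots, \vc{u}_{k_n}) = (1, \ldots, 1) = \vc{u}_k$ followed by an application of the chain rule. Your extra care in verifying that the composite's entries all equal $1$ and that $1/k = 1$ in $\Zp$ is exactly the justification the paper leaves implicit.
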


\begin{proof}
First note that none of $k_1, \ldots, k_n, k$ is a multiple of $p$, so
$\vc{u}_{k_i}$ and $\vc{u}_k$ are well-defined.  We have
\begin{align}
\ppi \of (\vc{u}_{k_1}, \ldots, \vc{u}_{k_n}) 
= 
(\underbrace{1, \ldots, 1}_k)
=
\vc{u}_k.
\end{align}
Applying $I$ to both sides and using the chain rule gives the result.
\end{proof}

We come now to the most delicate part of the argument.  Since $H(\vc{u}_n)
= \fq{p}(n)$, and since $\fq{p}(n)$ is $p^2$-periodic in $n$, if $I$ is to
be a constant multiple of $H$ then $I(\vc{u}_n)$ must also be
$p^2$-periodic in $n$.  We show this directly.

\begin{lemma}
\lbl{lemma:p-per}
$I(\vc{u}_{n + p^2}) = I(\vc{u}_n)$ for all natural numbers $n$ not
  divisible by $p$.
\end{lemma}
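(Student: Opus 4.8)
The plan is to exploit the fact that Lemma~\ref{lemma:p-ufm-to-gen} expresses $I(\ppi)$, for a fixed distribution $\ppi$ with all $\pi_i \neq 0$, in terms of the numbers $I(\vc{u}_{k_i})$ and $I(\vc{u}_k)$ attached to a choice of integer representatives $k_i$ of the $\pi_i$. Since the left-hand side depends only on $\ppi \in \Pi_n$ and not on the representatives, varying the $k_i$ within their residue classes mod~$p$ must produce relations among the values $I(\vc{u}_m)$. Writing $f(m) = I(\vc{u}_m)$ for $p \nmid m$, the goal $I(\vc{u}_{n + p^2}) = I(\vc{u}_n)$ becomes the assertion that $f$ is $p^2$-periodic, and I will extract just enough relations to prove this. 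Notably, no use of the multiplicativity of $f$ is needed.

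Concretely, I would work with three-element distributions $\ppi = (\pi_1, \pi_2, \pi_3) \in \Pi_3$ having all $\pi_i \neq 0$, choosing integer representatives $a_1, a_2, a_3 \geq 1$ with $p \nmid a_i$ and $a_1 + a_2 + a_3 \equiv 1 \pmod{p}$. Applying Lemma~\ref{lemma:p-ufm-to-gen} and then replacing a single representative, say $a_3$, by $a_3 + p$ (which lies in the same class mod~$p$), the two resulting expressions for $I(\ppi)$ must agree. Cancelling the unchanged terms and using that $p \cdot x = 0$ for every $x \in \Zp$, this collapses to
\[
g(a_1 + a_2 + a_3) = a_3\, g(a_3), \qquad \text{where } g(x) := f(x + p) - f(x),
\]
and symmetrically $g(a_1 + a_2 + a_3) = a_1 g(a_1) = a_2 g(a_2)$. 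Thus $a_1 g(a_1) = a_2 g(a_2) = a_3 g(a_3)$ whenever $a_1 + a_2 + a_3 \equiv 1 \pmod{p}$.

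From here the argument is short. Taking $(a_1, a_2, a_3) = (a, 1, c)$ with $c \geq 1$ chosen so that $c \equiv -a \pmod{p}$ (possible since $p \nmid a$ forces $-a \not\equiv 0$), the constraint $a + 1 + c \equiv 1$ holds and the corresponding three-element distribution has all entries nonzero, so I obtain $a\, g(a) = 1 \cdot g(1)$ for every $a \geq 1$ with $p \nmid a$. Hence $g(a) = C\, a^{-1} \pmod{p}$ for the single constant $C := g(1)$, where $a^{-1}$ denotes the inverse mod~$p$. Telescoping over a full residue class then finishes the proof:
\[
f(n + p^2) - f(n) = \sum_{j = 0}^{p - 1} g(n + jp) = \sum_{j = 0}^{p - 1} C (n + jp)^{-1} = p\, C\, n^{-1} = 0
\]
in $\Zp$, since $n + jp \equiv n \pmod{p}$ for each $j$.

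I expect the main subtlety to be conceptual rather than computational: one is tempted to seek a period-$p$ statement $f(n + p) = f(n)$, but that is false, since $g(a) = C a^{-1}$ need not vanish (for the intended answer $I = cH$ one computes $C = -c$). The periodicity is genuinely a mod-$p^2$ phenomenon, emerging only after summing the $p$ equal contributions $C n^{-1}$ over the residue class $\{n, n + p, \ldots, n + (p-1)p\}$, where the factor $p$ annihilates everything. A secondary point deserving care is that the whole scheme requires distributions with every entry nonzero; two-element distributions would not exist in the case $p = 2$, whereas the three-element version works uniformly for all primes and simultaneously disposes of the awkward value $n = 1$.
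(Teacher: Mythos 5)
Your proof is correct and follows essentially the same route as the paper's: both extract the key relation $a\,\bigl(f(a+p)-f(a)\bigr) = \text{const}$ by applying Lemma~\ref{lemma:p-ufm-to-gen} to a single distribution written with two different choices of integer representatives, and then telescope $p$ times so that the factor of $p$ annihilates the constant. The only difference is cosmetic: the paper uses the distribution $(n,1,\ldots,1)$ compared against a reference $m \equiv 1 \pmod{p}$ with $m \geq n$, whereas you use the three-element distribution $(a,1,c)$ with $c \equiv -a$, which normalizes directly against $g(1)$ and avoids the auxiliary choice of $m$.
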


\begin{proof}
First we prove the existence of a constant $c \in \Zp$ such that for all $n
\in \nat$ not divisible by $p$,
\begin{align}
\lbl{eq:per-one}
I(\vc{u}_{n + p}) = I(\vc{u}_n) - c/n.
\end{align}
(Compare Lemma~\ref{lemma:fq-elem}\bref{part:fq-elem-shift}.)  An
equivalent statement is that $n(I(\vc{u}_{n + p}) - I(\vc{u}_n))$ is
independent of $n \not\in p\nat$.  Since for any $n_1$ and $n_2$ we can
choose some $m \geq \max\{n_1, n_2\}$ with $m \equiv 1 \pmod{p}$, it is
enough to show that whenever $0 \leq n \leq m$ with $n \not\equiv 0
\pmod{p}$ and $m \equiv 1 \pmod{p}$,
\begin{align}
\lbl{eq:per-nm}
n \bigl( I(\vc{u}_{n + p}) - I(\vc{u}_{n}) \bigr)
=
I(\vc{u}_{m + p}) - I(\vc{u}_{m}).
\end{align}
To prove this, consider the distribution
\begin{align}
\ppi = (n, \underbrace{1, \ldots, 1}_{m - n}).
\end{align}
By Lemma~\ref{lemma:p-ufm-to-gen} and the fact that $I(\vc{u}_1) = 0$,
\begin{align}
I(\ppi) = I(\vc{u}_m) - nI(\vc{u}_n).
\end{align}
But also
\begin{align}
\ppi = (n + p, \underbrace{1, \ldots, 1}_{m - n}),
\end{align}
so by the same argument,
\begin{align}
I(\ppi) 
=
I(\vc{u}_{m + p}) - (n + p)I(\vc{u}_{n + p})    
= 
I(\vc{u}_{m + p}) - nI(\vc{u}_{n + p}).
\end{align}
Comparing the two expressions for $I(\ppi)$ gives
equation~\eqref{eq:per-nm}, thus proving the initial claim.

By induction on equation~\eqref{eq:per-one},
\begin{align}
I(\vc{u}_{n + rp}) = I(\vc{u}_n) - cr/n
\end{align}
for all $n, r \in \nat$ with $p \nmid n$.  The result follows
by putting $r = p$.
\end{proof}

We can now prove the characterization theorem for entropy modulo~$p$.

\begin{pfof}{Theorem~\ref{thm:fad-p}}
Define $f \from \{ n \in \nat \such p \ndvd n\} \to \Zp$ by $f(n) =
I(\vc{u}_n)$.  Lemma~\ref{lemma:ent-p-elem}, Lemma~\ref{lemma:p-per} and
Proposition~\ref{propn:fq-char} together imply that $f = c\fq{p}$ for some
$c \in \Zp$.  By Example~\ref{eg:ent-p-ufm}, an equivalent statement is
that $I(\vc{u}_n) = cH(\vc{u}_n)$ for all $n$ not divisible by $p$.

Since both $I$ and $cH$ satisfy the chain rule,
Lemma~\ref{lemma:p-ufm-to-gen} applies to both; and since $I$ and $cH$ are
equal on uniform distributions, they are also equal on all distributions
$\ppi$ such that $\pi_i \neq 0$ for all $i$.  Finally, applying
Lemma~\ref{lemma:ent-p-abs} to both $I$ and $cH$, we deduce by 
induction that $I(\ppi) = cH(\ppi)$ for all $\ppi \in \Pi_n$.
\end{pfof}

A variant of the characterization theorem will be useful.  The
distributions $(\pi_1, \ldots, \pi_n)$ considered so far can be viewed as
probability measures (mod~$p$) on sets of the form $\{1, \ldots, n\}$, but
it will be convenient to generalize to arbitrary finite sets.

Thus, given a finite set $X$, write $\Pi_X$ for the set of families
$\ppi = (\pi_x)_{x \in X}$ of elements of $\Zp$ such that $\sum_{x \in X}
\pi_x = 1$.  A \demph{finite probability space mod~$p$} is a finite set $X$
together with an element $\ppi \in \Pi_X$.  

As in the real case, we can take convex combinations of probability spaces.
Given a finite probability space $(X, \ppi)$ and a further family $(Y_x,
\ggamma^x)_{x \in X}$ of finite probability spaces, all mod~$p$, we obtain a new
probability space 
\begin{align}
\Biggl( \coprod_{x \in X} Y_x, \coprod_{x \in X} \pi_x \ggamma^x \Biggr)
\end{align}
mod~$p$.  Here $\coprod Y_x$ is the disjoint union of the sets $Y_x$, and
$\coprod \pi_x \ggamma^x$ gives probability $\pi_x \gamma^x_y$ to
an element $y \in Y_x$.

The operation of taking convex combinations is simply composition of
distributions, in different notation.  Indeed, if $X = \{1, \ldots, n\}$
and $Y_x = \{1, \ldots, k_x\}$ then the set $\coprod Y_x$ is naturally
identified with $\{1, \ldots, k_1 + \cdots + k_n\}$, and under this
identification, $\coprod \pi_x \ggamma^x$ corresponds to the composite
distribution $\ppi \of (\ggamma^1, \ldots, \ggamma^n)$.

The \demph{entropy} of $\ppi \in \Pi_X$ is, of course, defined as
\begin{align}
H(\ppi) = \frac{1}{p} \Biggl( 1 - \sum_{x \in X} a_x^p \Biggr),
\end{align}
where $a_x \in \Z$ represents $\pi_x \in \Zp$ for each $x \in X$.  It is
\demph{isomorphism-invariant}: whenever $(Y, \ssigma)$
and $(X, \ppi)$ are finite probability spaces mod~$p$ and there is some
bijection $f \from Y \to X$ satisfying $\sigma_y = \pi_{f(y)}$ for all $y
\in Y$, then $H(\ssigma) = H(\ppi)$.  The chain rule for entropy mod $p$,
translated into the notation of convex combinations, states that
\begin{align}
\lbl{eq:chn-cc}
H\Biggl( \coprod_{x \in X} \pi_x \ggamma^x \Biggr)
=
H(\ppi) + \sum_{x \in X} \pi_x H(\ggamma^x)
\end{align}
for all finite probability spaces $(X, \ppi)$ and $(Y_x, \ggamma^x)$ mod~$p$.

\begin{cor}
\lbl{cor:g-faddeev}
Let $I$ be a function assigning an element $I(\ppi)$ of $\Zp$ to each finite
probability space $(X, \ppi)$ mod~$p$.  The following are equivalent:
\begin{enumerate}
\item
\lbl{part:g-faddeev-condns}
$I$ is isomorphism-invariant and satisfies the chain
rule~\eqref{eq:chn-cc} (with $I$ in place of $H$); 

\item
\lbl{part:g-faddeev-form}
$I = cH$ for some $c \in \Zp$.
\end{enumerate}
\end{cor}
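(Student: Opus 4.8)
The plan is to reduce the corollary to Theorem~\ref{thm:fad-p} by restricting attention to the standard finite sets $\{1, \ldots, n\}$ and then transporting the conclusion back to arbitrary finite sets using isomorphism-invariance. The implication \bref{part:g-faddeev-form}~$\Rightarrow$~\bref{part:g-faddeev-condns} is immediate: $H$ is isomorphism-invariant and satisfies the chain rule~\eqref{eq:chn-cc}, and both properties are clearly preserved under multiplication by a constant $c \in \Zp$. So all the work lies in the converse.

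For the converse, suppose $I$ is isomorphism-invariant and satisfies~\eqref{eq:chn-cc}. First I would define a sequence of functions $I' \from \Pi_n \to \Zp$ by setting $X = \{1, \ldots, n\}$, so that $I'(\ppi) = I(\ppi)$ for $\ppi \in \Pi_n = \Pi_{\{1, \ldots, n\}}$. The key step is then to check that $I'$ satisfies the chain rule in the sense of Proposition~\ref{propn:chn-p}. This is precisely the content of the remark preceding the corollary: when $X = \{1, \ldots, n\}$ and $Y_x = \{1, \ldots, k_x\}$, the disjoint union $\coprod_x Y_x$ is canonically identified with $\{1, \ldots, k_1 + \cdots + k_n\}$, and under this identification $\coprod_x \pi_x \ggamma^x$ becomes the composite $\ppi \of (\ggamma^1, \ldots, \ggamma^n)$. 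Since this identification is a bijection matching up the probabilities, isomorphism-invariance of $I$ forces $I$ to take the same value on $\coprod_x \pi_x \ggamma^x$ as on the composite, so~\eqref{eq:chn-cc} for $I$ reads exactly as~\eqref{eq:chn-p-main} for $I'$.

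With the composition chain rule in hand, Theorem~\ref{thm:fad-p} applies directly and yields a constant $c \in \Zp$ with $I' = cH$ on every $\Pi_n$. It then remains to promote this equality from standard sets to arbitrary finite sets. Given any finite probability space $(X, \ppi)$ with $\mg{X} = n$, I would choose a bijection $f \from \{1, \ldots, n\} \to X$ and let $\ppi' \in \Pi_n$ be the transported distribution $\pi'_i = \pi_{f(i)}$. Isomorphism-invariance of $I$ gives $I(\ppi) = I'(\ppi')$, and isomorphism-invariance of $H$ gives $H(\ppi) = H(\ppi')$; combining these with $I'(\ppi') = cH(\ppi')$ yields $I(\ppi) = cH(\ppi)$, as required.

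I do not expect a genuine obstacle: the corollary is a formal consequence of Theorem~\ref{thm:fad-p}. The only point requiring care is that isomorphism-invariance is used twice---once to see that the convex-combination chain rule on standard sets collapses to the composition chain rule (making Theorem~\ref{thm:fad-p} applicable), and once to extend the resulting equality off the standard sets. This double use of isomorphism-invariance is exactly why that hypothesis is included in~\bref{part:g-faddeev-condns}, even though Theorem~\ref{thm:fad-p} itself needed no symmetry or invariance assumption.
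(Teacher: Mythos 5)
Your proof is correct and follows essentially the same route as the paper: restrict $I$ to the standard sets $\{1,\ldots,n\}$, apply Theorem~\ref{thm:fad-p}, and transport back via isomorphism-invariance. Your explicit note that isomorphism-invariance is also needed to identify the convex-combination chain rule on standard sets with the composition chain rule is a detail the paper leaves implicit (via the ``naturally identified'' remark before the corollary), but it is the same argument.
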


\begin{proof}
That~\bref{part:g-faddeev-form} implies~\bref{part:g-faddeev-condns}
follows from the observations above.
Conversely, take a function $I$ satisfying~\bref{part:g-faddeev-condns}.
Restricting $I$ to finite sets of the form $\{1, \ldots, n\}$ defines a
sequence of functions $(I \from \Pi_n \to \Zp)_{n \geq 1}$ satisfying the
chain rule.  By Theorem~\ref{thm:fad-p}, there is some constant $c \in \Zp$
such that $I(\ppi) = cH(\ppi)$ for all $n \geq 1$ and $\ppi \in \Pi_n$.
Now take any finite probability space $(Y, \ssigma)$.  We have
\begin{align}
(Y, \ssigma) 
\iso
\bigl( \{1, \ldots, n\}, \ppi \bigr)
\end{align}
for some $n \geq 1$ and $\ppi \in \Pi_n$, and by
isomorphism-invariance of both $I$ and $H$,
\begin{align}
I(\ssigma)
=
I(\ppi)
=
cH(\ppi)
=
cH(\ssigma),
\end{align}
proving~\bref{part:g-faddeev-form}.
\end{proof}

\begin{remark}
This corollary is slightly weaker than our main characterization result,
Theorem~\ref{thm:fad-p}.  Indeed, if $I$ is an isomorphism-invariant
function on the class of finite probability spaces mod~$p$ then in
particular, permuting the arguments of a measure does not change the
value that $I$ gives it.  Thus, the corollary also follows from a weaker
version of Theorem~\ref{thm:fad-p} in which the putative entropy
function is also assumed to be symmetric in its arguments.  But
Theorem~\ref{thm:fad-p} shows that the symmetry assumption is, in fact,
unnecessary.
\end{remark}

\section{Information loss}
\lbl{sec:loss}

\begin{quote}%
Grothendieck came along and said, `No, the
  Riemann--Roch theorem is \emph{not} a theorem about varieties, it's a
  theorem about morphisms between varieties.'
\hfill 
---Nicholas Katz (quoted in \cite{JackCAN}, p.~1046).
\end{quote}

The entropy of a probability space is a special case of a more general
concept, the information loss of a map between probability spaces.  This
point is most easily explained through the real case, as follows.

Given a real probability distribution $\ppi$ on a finite set, the entropy
of $\ppi$ is the amount of information gained by learning the result of an
observation drawn from $\ppi$.  For example, if $\ppi = (1/4, 1/4, 1/4,
1/4)$ then the entropy (to base $2$) is $2$, reflecting the fact that
results of draws from $\ppi$ cannot be communicated in fewer than $2$ bits
each.

In the same spirit, one can ask how much information is lost by a
deterministic process.  Consider, for instance, the process of forgetting
the suit of a card drawn fairly from a standard $52$-card pack.  Since the
four suits are distributed uniformly, $2$ bits of information are lost.  An
alternative viewpoint is that the information loss is the amount of
information at the start of the process minus the amount at the end, which
is
\begin{align}
H(1/52, \ldots, 1/52) - H(1/13, \ldots, 1/13)
=
\log 52 - \log 13
=
\log 4.
\end{align}
If we take logarithms to base $2$ then the information loss is, again, $2$
bits.  Hence the two viewpoints give the same result.

Generally, given a measure-preserving map $f \from (Y, \ssigma) \to (X,
\ppi)$ between finite probability spaces, we can quantify the information
lost by $f$ in either of two equivalent ways.  We can condition on the
outcome $x$, taking for each $x$ the amount of information lost by
collapsing the fibre $f^{-1}(x)$:
\begin{align}
\lbl{eq:loss-cond}
\sum_{x \csuch \pi_x \neq 0} 
\pi_x
H \Biggl( 
\biggl(\frac{\sigma_y}{\pi_x}\biggr)_{y \in f^{-1}(x)}
\Biggr).
\end{align}
(The argument of $H$ is the distribution $\ssigma$ restricted to
$f^{-1}(x)$ and normalized to sum to $1$.)  Alternatively, we can subtract
the amount of information at the end of the process from the amount at the
start:
\begin{align}
\lbl{eq:loss-diff}
H(\ssigma) - H(\ppi).
\end{align}
The two expressions~\eqref{eq:loss-cond} and~\eqref{eq:loss-diff} are
equal, as we will show in the analogous mod $p$ case.

Entropy is the special case of information loss where one discards
\emph{all} the information.  That is, the entropy of a probability
distribution $\ssigma$ on a set $Y$ is the information loss of the unique
map from $(Y, \ssigma)$ to the one-point space.  In this sense, the concept
of information loss subsumes the concept of entropy.

The description so far is of information loss over $\R$, which was analysed
and characterized in Baez, Fritz and Leinster~\cite{CETIL}.  (In
particular, equation~(5) of~\cite{CETIL} describes the relationship between
information loss and conditional entropy.)  We now show that a strictly
analogous characterization theorem holds over $\Zp$, even in the absence of
an information-theoretic interpretation.

\begin{defn}
Let $(Y, \ssigma)$ and $(X, \ppi)$ be finite probability spaces mod $p$.  A
\demph{measure-preserving map} $f \from (Y, \ssigma) \to (X, \ppi)$ is a
function $f \from Y \to X$ such that
\begin{align}
\pi_x = \sum_{y \in f^{-1}(x)} \sigma_y
\end{align}
for all $x \in X$.  
\end{defn}

Finite probability spaces mod~$p$ and their measure-preserving maps form a
category $\FinProbp$.  The construction of convex combinations is
functorial, in the following sense: given a finite probability space $(X,
\ppi)$ mod~$p$ and a family of maps
\begin{align}
\Bigl( (Y_x, \ssigma^x) \toby{f_x} (Z_x, \ttau^x) \Bigr)_{x \in X}
\end{align}
in $\FinProbp$, we have the map
\begin{align}
\coprod_{x \in X} \pi_x f_x \from 
\Biggl( \coprod_{x \in X} Y_x, \coprod_{x \in X} \pi_x \ssigma^x \Biggr)
\to
\Biggl( \coprod_{x \in X} Z_x, \coprod_{x \in X} \pi_x \ttau^x \Biggr)
\end{align}
in $\FinProbp$ that maps $y \in Y_x$ to $f_x(y) \in Z_x$.  (Although the
function $\coprod \pi_x f_x$ does not depend on $\ppi$ and would usually be
written as just $\coprod f_x$, it will be convenient to use this more
informative notation.)

Entropy is an invariant of the objects of $\FinProbp$, and
information loss is an invariant of the maps in $\FinProbp$:

\begin{defn}
Let $f \from (Y, \ssigma) \to (X, \ppi)$ be a measure-preserving map
between finite probability spaces mod~$p$.  The \demph{information loss} of
$f$ is
\begin{align}
L(f) = H(\ssigma) - H(\ppi) \in \Zp.
\end{align}
\end{defn}

\begin{lemma}
Let $f \from (Y, \ssigma) \to (X, \ppi)$ be a measure-preserving map
between finite probability spaces mod~$p$.  Then
\begin{align}
L(f)
=
\sum_{x \csuch \pi_x \neq 0} 
\pi_x
H \Biggl( 
\biggl(\frac{\sigma_y}{\pi_x}\biggr)_{y \in f^{-1}(x)}
\Biggr).
\end{align}
\end{lemma}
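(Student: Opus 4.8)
The plan is to read this identity as the chain rule~\eqref{eq:chn-cc} transported along $f$. The key observation is that $\ssigma$ is precisely the convex combination of the fibrewise distributions. For each $x \in X$ with $\pi_x \neq 0$, set $\ggamma^x = (\sigma_y/\pi_x)_{y \in f^{-1}(x)}$; the measure-preserving condition gives $\sum_{y \in f^{-1}(x)} \sigma_y/\pi_x = \pi_x/\pi_x = 1$, so $\ggamma^x$ is a genuine element of $\Pi_{f^{-1}(x)}$, and $\pi_x \ggamma^x$ recovers $\ssigma$ on the fibre $f^{-1}(x)$. Recall also that $L(f) = H(\ssigma) - H(\ppi)$ by definition, so the task is to produce $H(\ssigma) - H(\ppi) = \sum_{x \csuch \pi_x \neq 0} \pi_x H(\ggamma^x)$.

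First I would dispose of the case in which $\pi_x \neq 0$ for every $x \in X$. Then each fibre $f^{-1}(x)$ is nonempty, the fibres partition $Y$, and under the identification $Y = \coprod_{x \in X} f^{-1}(x)$ we have $\ssigma = \coprod_{x \in X} \pi_x \ggamma^x$. The chain rule~\eqref{eq:chn-cc} then reads $H(\ssigma) = H(\ppi) + \sum_{x \in X} \pi_x H(\ggamma^x)$, and subtracting $H(\ppi)$ yields $L(f) = \sum_{x \in X} \pi_x H(\ggamma^x)$, which is the assertion, no $\pi_x$ vanishing.

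The step I expect to be delicate is the treatment of the null fibres $f^{-1}(x)$ with $\pi_x = 0$, for which $(\sigma_y/\pi_x)_y$ is undefined and the corresponding term is dropped from the sum. Here one must account for the contribution of such a fibre to $L(f) = H(\ssigma) - H(\ppi)$. This is exactly where the mod~$p$ theory departs from the real one: over $\R$, positivity forces $\sigma_y = 0$ throughout a null fibre, whereas mod~$p$ the masses $(\sigma_y)_{y \in f^{-1}(x)}$ may be nonzero while summing to $0$, the subtlety recorded in Section~\ref{sec:defn-ent}. I would therefore isolate the union of the null fibres and, choosing integer representatives $b_y$ of the $\sigma_y$, compute their effect on $H(\ssigma)$ directly from $H(\ssigma) = \tfrac{1}{p}\bigl(1 - \sum_y b_y^p\bigr)$, using that $\sum_{y \in f^{-1}(x)} b_y \equiv 0 \pmod{p}$ on each such fibre. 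Reconciling this computation with the absence of those terms on the right-hand side is the crux of the argument, and is the point at which I would expect the real work to lie.
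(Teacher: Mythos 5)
Your first case is exactly the paper's argument: when every $\pi_x$ is nonzero, $(Y,\ssigma)$ is isomorphic to $\bigl(\coprod_{x}f^{-1}(x),\coprod_{x}\pi_x\ggamma^x\bigr)$ and the chain rule~\eqref{eq:chn-cc} gives the identity at once. The gap in your proposal is that you never carry out the null-fibre case; you only announce that the real work lies there. As written, that is a genuine hole. But your instinct about where the difficulty sits is sharper than the paper's own treatment: the paper disposes of that case in one line (``since entropy is unaffected by adjoining elements of probability $0$, we may assume $\pi_x\neq 0$ for each $x$''), and that reduction is precisely the step that breaks mod~$p$. Deleting a point $x$ with $\pi_x=0$ from $X$ costs nothing, but one must also delete its fibre from $Y$, and that changes $H(\ssigma)$ whenever the masses on the fibre are nonzero while summing to zero --- exactly the phenomenon you flag from Section~\ref{sec:defn-ent}.

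In fact the computation you defer cannot be completed, because the identity fails in that case. Take $p=3$, let $Y=\{1,2,3,4\}$ with $\ssigma=(1,1,1,1)\in\Pi_4$, let $X=\{a,b\}$ with $\ppi=(1,0)$, and let $f$ send $1\mapsto a$ and $2,3,4\mapsto b$. This $f$ is measure-preserving, since $\sigma_2+\sigma_3+\sigma_4=3\equiv 0=\pi_b$. Then
\begin{align}
L(f)=H(1,1,1,1)-H(1,0)=\fq{3}(4)-0=-1,
\qquad
\sum_{x\csuch\pi_x\neq 0}\pi_x
H\Bigl(\bigl(\tfrac{\sigma_y}{\pi_x}\bigr)_{y\in f^{-1}(x)}\Bigr)
=1\cdot H\bigl((1)\bigr)=0,
\end{align}
and $-1\neq 0$ in $\Z/3\Z$. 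So the lemma needs an additional hypothesis --- for instance that $\ssigma$ vanishes on every null fibre, which is automatic over $\R$ by positivity but not mod~$p$ --- and under that hypothesis your first case already constitutes the entire proof. Had you pushed your proposed $\partial$-computation through, you would have found this obstruction rather than a proof; you should say so explicitly rather than leaving the case open.
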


\begin{proof}
Since entropy is unaffected by adjoining elements of probability $0$, we
may assume that $\pi_x \neq 0$ for each $x \in X$.  Write $\ggamma^x$ for
the probability distribution $(\sigma_y/\pi_x)_{y \in f^{-1}(x)}$ on the
set $f^{-1}(x)$.  The probability space $(Y, \ssigma)$ mod~$p$ is 
isomorphic to
\begin{align}
\Biggl( 
\coprod_{x \in X} f^{-1}(x), 
\coprod_{x \in X} \pi_x \ggamma^x
\Biggr),
\end{align}
and the chain rule~\eqref{eq:chn-cc} then gives
\begin{align}
H(\ssigma) = H(\ppi) + \sum_{x \in X} \pi_x H(\ggamma^x).
\end{align}
\end{proof}

Information loss has some intuitively reasonable properties.  First, an
invertible process loses no information: $L(f) = 0$ whenever $f$ is an
isomorphism in $\FinProbp$.  This follows from the isomorphism-invariance
of entropy.

Second, the information loss of two processes performed in series is the
sum of the information lost by each individually:
\begin{align}
L(g \of f) = L(g) + L(f)
\end{align}
for any maps
\begin{align}
\lbl{eq:series}
(Y, \ssigma) \toby{f} (X, \ppi) \toby{g} (W, \nnu)
\end{align}
in $\FinProbp$.  This is immediate from the definition.

Third, the information loss of a convex combination of two
processes performed in parallel is the corresponding convex combination of
their individual information losses.  That is, given
$\lambda \in \Zp$ and maps
\begin{eqnarray*}
(Y, \ssigma) &\toby{f}& (Z, \ttau),   \\
(Y', \ssigma') &\toby{f'}& (Z', \ttau')
\end{eqnarray*}
in $\FinProbp$, we have
\begin{align}
\lbl{eq:parallel-loss}
L(\lambda f \cpd (1 - \lambda) f') = \lambda L(f) + (1 - \lambda) L(f').
\end{align}
Indeed, using the chain rule~\eqref{eq:chn-cc} and writing $\lambda' = 1 -
\lambda$, 
\begin{align}
L(\lambda f \cpd \lambda' f')      &
=
H(\lambda \ssigma \cpd \lambda' \ssigma')
-
H(\lambda \ttau \cpd \lambda' \ttau')        \\
&
=
\Bigl\{ 
H(\lambda, \lambda') + \lambda H(\ssigma) + \lambda' H(\ssigma') 
\Big\} 
-
\Bigl\{ 
H(\lambda, \lambda') + \lambda H(\ttau) + \lambda' H(\ttau') 
\Big\},
\end{align}
and equation~\eqref{eq:parallel-loss} follows.

These three properties of information loss mod~$p$ are enough to
characterize it completely, up to a constant factor.

\begin{thm}
\lbl{thm:loss}
Let $K$ be a function assigning an element $K(f)$ of $\Zp$ to each
measure-preserving map $f$ between finite probability spaces mod~$p$.  The
following are equivalent:
\begin{enumerate}
\item 
\lbl{part:loss-condns}
$K$ has these three properties:
\begin{enumerate}
\item 
\lbl{part:lc-iso}
$K(f) = 0$ for all isomorphisms $f$;

\item
\lbl{part:lc-series}
$K(g \of f) = K(g) + K(f)$ for all composable pairs~\eqref{eq:series} of
  measure-preserving maps;

\item
\lbl{part:lc-parallel}
$K\bigl(\lambda f \cpd (1 - \lambda) f'\bigr) = \lambda K(f) + (1 -
  \lambda) K(f')$ for all measure-preserving maps $f$ and $f'$ and all
  $\lambda \in \Zp$;
\end{enumerate}

\item
\lbl{part:loss-form}
$K = cL$ for some $c \in \Zp$.
\end{enumerate}
\end{thm}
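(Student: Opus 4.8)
The plan is to reduce the statement to the entropy characterization already proved, Corollary~\ref{cor:g-faddeev}. The implication \bref{part:loss-form}$\Rightarrow$\bref{part:loss-condns} is immediate: conditions \bref{part:lc-iso}--\bref{part:lc-parallel} are exactly the three properties of $L$ verified above (isomorphism-invariance of entropy, the series law, and the parallel law~\eqref{eq:parallel-loss}), and each is preserved under multiplication by a scalar $c \in \Zp$. So the real content is the converse, and throughout I fix a $K$ satisfying \bref{part:lc-iso}--\bref{part:lc-parallel}.

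The first move is to extract an entropy-like function from $K$. Write $\vc{1}$ for the one-point space $(\{\ast\}, \vc{u}_1)$ and, for each finite probability space $(X,\ppi)$ mod~$p$, let $\epsilon_\ppi \from (X,\ppi) \to \vc{1}$ be the unique map, which is measure-preserving since $\sum_x \pi_x = 1$. Set $F(\ppi) = K(\epsilon_\ppi)$. For any measure-preserving $f \from (Y,\ssigma) \to (X,\ppi)$ we have $\epsilon_\ssigma = \epsilon_\ppi \of f$, so the series law \bref{part:lc-series} gives
\begin{align}
K(f) = K(\epsilon_\ssigma) - K(\epsilon_\ppi) = F(\ssigma) - F(\ppi).
\end{align}
Applying this with $f$ an isomorphism and using \bref{part:lc-iso} shows $F$ is isomorphism-invariant. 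Thus $K$ is completely determined by the single isomorphism-invariant function $F$, and it suffices to prove $F = cH$ for some $c \in \Zp$; the displayed identity then yields $K(f) = c\bigl(H(\ssigma) - H(\ppi)\bigr) = cL(f)$.

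To identify $F$ I would invoke Corollary~\ref{cor:g-faddeev}, for which it remains only to verify that $F$ satisfies the chain rule~\eqref{eq:chn-cc}. Given $(X,\ppi)$ and spaces $(Y_x, \ggamma^x)$, the map to the point from the convex combination factors as $\epsilon_{\coprod_x \pi_x \ggamma^x} = \epsilon_\ppi \of q$, where the map $q = \coprod_{x \in X} \pi_x\, \epsilon_{\ggamma^x}$ collapses each component $Y_x$ onto the corresponding point of $X$. The series law gives $F\bigl(\coprod_x \pi_x \ggamma^x\bigr) = F(\ppi) + K(q)$, so the chain rule for $F$ is equivalent to the convex-linearity identity
\begin{align}
K(q) = K\Bigl( \coprod_{x \in X} \pi_x\, \epsilon_{\ggamma^x} \Bigr) = \sum_{x \in X} \pi_x F(\ggamma^x),
\end{align}
namely the $\mg{X}$-fold version of the parallel law \bref{part:lc-parallel} applied to the maps $\epsilon_{\ggamma^x}$.

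The main obstacle is precisely this step: deducing many-component convex-linearity from the two-component law \bref{part:lc-parallel}. The natural approach is to write $q$ as a composite of maps each nontrivial in a single component and apply the series law, reducing to $K\bigl(\coprod_x \pi_x h_x\bigr) = \pi_{x_0} K(h_{x_0})$ when only the component $x_0$ carries a non-identity map $h_{x_0}$. When $\pi_{x_0} \neq 1$ this follows from \bref{part:lc-parallel} by splitting off $x_0$, since the remaining components then form an identity map, killed by \bref{part:lc-iso}. The delicate case is $\pi_{x_0} = 1$: then the complementary components carry total probability $0$ and cannot be normalised to a genuine distribution, so one must show directly from \bref{part:lc-iso}--\bref{part:lc-parallel} that adjoining an identity map on a space of total weight $0$ leaves $K$ unchanged. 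This degeneracy---absent over $\R$ but unavoidable mod~$p$, and forced in every nontrivial splitting when $p = 2$---is where the argument needs the most care, and is the step I expect to occupy the bulk of the work. Once convex-linearity is established, Corollary~\ref{cor:g-faddeev} gives $F = cH$, whence $K = cL$.
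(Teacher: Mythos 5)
Your strategy is exactly the paper's: set $I(\ppi) = K(!_{\ppi})$, deduce $K(f) = I(\ssigma) - I(\ppi)$ from \bref{part:lc-series}, get isomorphism-invariance from \bref{part:lc-iso}, establish the chain rule for $I$ by computing $K$ of the collapsing map $\coprod_x \pi_x\,!_{\ggamma^x}$ in two ways, and invoke Corollary~\ref{cor:g-faddeev}. The one place you diverge is that you take seriously the passage from the binary law \bref{part:lc-parallel} to the $\mg{X}$-ary identity $K\bigl(\coprod_x \pi_x f_x\bigr) = \sum_x \pi_x K(f_x)$; the paper dismisses this with the words ``by condition (c) and induction''. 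You are right that this is the only nontrivial point, and right that the naive induction fails when a complementary block has total weight $0$ without all its individual weights being $0$ --- a degeneracy with no counterpart over $\R$, where nonnegativity forces total weight $0$ to mean identically $0$. But your proposal stops exactly there (``the step I expect to occupy the bulk of the work''), so it is incomplete at precisely the point where the paper is terse, and your proposed reduction to single-nontrivial-component maps does not avoid the problem even for odd $p$ (e.g.\ $p = 3$, $\ppi = (1,2,1)$: splitting off the first component leaves weights $(2,1)$ summing to $0$).

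For $p > 2$ the gap is fillable, but by inducting on $\mg{X}$ using an arbitrary two-block partition $X = S \sqcup T$ rather than peeling off one component: if some $\pi_x = 0$ one applies \bref{part:lc-parallel} with $\lambda = 0$ (the weight-$0$ block genuinely is $0 \cdot (\text{probability map})$); otherwise one seeks $S$ with $\lambda = \sum_{x \in S}\pi_x \notin \{0,1\}$, so that both blocks normalise and the inductive hypothesis applies to each. The only configuration with no such $S$ and no zero weight is: all $\pi_x$ equal, and every subset-sum in $\{0,1\}$; taking $\mg{S} = 1$ and $\mg{S} = 2$ forces all $\pi_x = 1$ and $2 \in \{0,1\}$ in $\Zp$, i.e.\ $p = 2$. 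For $p = 2$ I believe the difficulty is not removable and the implication \bref{part:loss-condns}$\Rightarrow$\bref{part:loss-form} fails as stated: since $\lambda$ ranges only over $\{0,1\}$, condition \bref{part:lc-parallel} says nothing beyond ``adjoining a component all of whose measures are $0$ leaves $K$ unchanged'', and scaling by $0$ annihilates supports. Concretely, take any $\phi \from \{1,3,5,\ldots\} \to \Ztwo$ with $\phi(1) = 0$, put $I(\ppi) = \phi(\mg{\supp(\ppi)})$ and $K(f) = I(\ssigma) - I(\ppi)$ for $f \from (Y,\ssigma) \to (X,\ppi)$. This $K$ satisfies \bref{part:lc-iso} and \bref{part:lc-series} because $I$ is isomorphism-invariant, and \bref{part:lc-parallel} because $\supp(1\cdot\ssigma \cpd 0\cdot\ssigma') = \supp(\ssigma)$; yet choosing $\phi(3) = \phi(5) = 1$ gives $K(!_{\vc{u}_3}) = 1 = L(!_{\vc{u}_3})$ while $K(!_{\vc{u}_5}) = 1 \neq 0 = L(!_{\vc{u}_5})$, so $K$ is not $cL$ for any $c$. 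So the step you isolated is a genuine gap in both your argument and the paper's; for odd $p$ it needs the partition argument above, and for $p = 2$ the theorem appears to require a strengthened hypothesis (for instance, assuming the $n$-ary version of \bref{part:lc-parallel} outright).
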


\begin{remark}
Like any group, $\Zp$ can be regarded as a one-object category, and
conditions~\hardref{(a)} and~\hardref{(b)} then imply that $K$ is a functor
$\FinProbp \to \Zp$.  
\end{remark}

\begin{pfof}{Theorem~\ref{thm:loss}}
We have already shown that information loss $L$ satisfies the three
conditions of~\bref{part:loss-condns}, and it follows
that~\bref{part:loss-form} implies~\bref{part:loss-condns}.

For the converse, suppose that $K$ satisfies~\bref{part:loss-condns}.
Given a finite probability space $(X, \ppi)$, write $!_{\ppi}$ for the unique
measure-preserving map
\begin{align}
!_{\ppi} \from (X, \ppi) \to (\{1\}, \vc{u}_1),
\end{align}
and define $I(\ppi) = K(!_{\ppi})$.  For any measure-preserving map $f \from
(Y, \ssigma) \to (X, \ppi)$, the triangle
\begin{align}
\xymatrix{
(Y, \ssigma) \ar[rr]^f \ar[rd]_{!_{\ssigma}}      &
&
(X, \ppi) \ar[ld]^{!_{\ppi}}        \\
&
(\{1\}, \vc{u}_1)
}
\end{align}
commutes, so by condition~\hardref{(b)}, 
\begin{align}
\lbl{eq:K-I-diff}
K(f) = I(\ssigma) - I(\ppi).
\end{align}
So in order to prove the theorem, it suffices to show that $I = cH$ for
some constant $c$.  And for this, it is enough to prove that $I$
satisfies the hypotheses of Corollary~\ref{cor:g-faddeev}.

First, $I$ is isomorphism-invariant, since if $f\from (Y, \ssigma) \to (X,
\ppi)$ is an isomorphism then $K(f) = 0$, so $I(\ssigma) = I(\ppi)$
by~\eqref{eq:K-I-diff}.  

Second, $I$ satisfies the chain rule~\eqref{eq:chn-cc}; that is,
\begin{align}
\lbl{eq:cetil-ch}
I\Biggl( \coprod_{x \in X} \pi_x \ggamma^x \Biggr)
=
I(\ppi) + \sum_{x \in X} \pi_x I(\ggamma^x)
\end{align}
for all finite probability spaces $(X, \ppi)$ and $(Y_x, \ggamma^x)$
mod~$p$.  To see this, write
\begin{align}
f \from
\coprod_{x \in X} Y_x \to X
\end{align}
for the function defined by $f(y) = x$ whenever $y \in Y_x$.  Then $f$
defines a measure-preserving map
\begin{align}
f \from
\Biggl( \coprod_{x \in X} Y_x, \coprod \pi_x \ggamma^x \Biggr)
\to
(X, \ppi)
\end{align}
We now evaluate $K(f)$ in two ways.  On the one hand, by
equation~\eqref{eq:K-I-diff},
\begin{align}
K(f) =
I\Bigl( \coprod \pi_x \ggamma^x \Bigr) - I(\ppi).
\end{align}
On the other, 
\begin{align}
f = \coprod \pi_x \, !_{\ggamma^x},
\end{align}
so by condition~\hardref{(c)} and induction,
\begin{align}
K(f)
=
\sum \pi_x K(!_{\ggamma^x})
=
\sum \pi_x I(\ggamma^x).
\end{align}
Comparing the two expressions for $K(f)$ gives the chain rule 
(equation~\eqref{eq:chn-cc}) for $I$, as claimed.

Corollary~\ref{cor:g-faddeev} therefore applies, giving $I = cH$ for some
$c \in \Zp$.  It follows from equation~\eqref{eq:K-I-diff} that $K = cL$.
\end{pfof}

Theorem~\ref{thm:loss} has two striking features.  First, the main
equations that characterize information loss,
\begin{align}
L(g \of f) = L(g) + L(f),
\qquad
L(\lambda f \cpd (1 - \lambda) f') 
=
\lambda L(f) + (1 - \lambda) f',
\end{align}
are entirely linear.  Despite the fact that information loss subsumes
entropy, the equations are simpler in form than the characterizing equation
for entropy, the chain rule.

A second striking feature of Theorem~\ref{thm:loss} is that the axioms
on the hypothetical information loss function $K$ force
$K(f)$ to depend only on the domain and codomain of $f$.  This is an
instance of a general categorical fact: for a functor $K$ from a category
$\cat{P}$ with a terminal object to a groupoid, $K(f) = K(f')$
whenever $f$ and $f'$ are maps in $\cat{P}$ with the same domain and the same
codomain.

\section{The residue mod $p$ of real entropy}
\lbl{sec:res}

At the end of the note~\cite{KontOHL} in which he initiated the
subject of entropy modulo a prime, Kontsevich wrote:
\begin{quote}
\emph{Conclusion:} If we have a random variable $\xi$ which takes
finitely many values with all probabilities in $\Q$ then we can define
not only the transcendental number $H(\xi)$ but also its `residues
modulo $p$' for almost all primes $p$\,!
\end{quote}
Formally, given $n \geq 1$ and a prime $p$, write $\Rat{n}{p}$ for the set
of finite probability distributions $\ppi = (\pi_1, \ldots, \pi_n)$ where
each $\pi_i$ is a rational number expressible as a fraction with
denominator not divisible by $p$.  Then each $\ppi \in \Rat{n}{p}$
represents an element of $\Pi_n$, and the suggestion is to view $H_p(\ppi)
\in \Zp$ as the residue mod~$p$ of the real number $H_\R(\ppi)$.

Although the quotation above was the sum total of what Kontsevich wrote on
the matter, his suggestion can be developed.  First, different
distributions can have the same entropy over $\R$; for instance,
\begin{align}
H_\R(1/2, 1/8, 1/8, 1/8, 1/8) 
=
H_\R(1/4, 1/4, 1/4, 1/4).
\end{align}
There is, therefore, a question of consistency: Kontsevich's proposal only
makes sense if
\begin{align}
H_\R(\ppi) = H_\R(\ggamma) \implies H_p(\ppi) = H_p(\ggamma)
\end{align}
for all $\ppi \in \Rat{n}{p}$ and $\ggamma \in \Rat{m}{p}$.  Second, the
word `residue' suggests additivity: that the residue of a sum should be the
sum of the residues.  

We will show that both these properties are indeed satisfied: there is a
well-defined, addition-preserving map
\begin{align}
\begin{array}{ccc}
\bigcup_{n = 1}^\infty 
\{ H_\R(\ppi) \such \ppi \in \Rat{n}{p} \}      &
\to     &
\Zp,    \\
H_\R(\ppi)      &\mapsto        &H_p(\ppi).
\end{array}
\end{align}
\begin{lemma}
\lbl{lemma:p-prod-par}
Let $n, m \geq 1$ and let $a_1, \ldots, a_n, b_1, \ldots, b_m \geq 0$ be
integers.  Then
\begin{align}
\prod_{i = 1}^n a_i^{a_i} = \prod_{j = 1}^m b_j^{b_j}
\implies
\sum_{i = 1}^n \partial_p(a_i) = \sum_{j = 1}^m \partial_p(b_j),
\end{align}
where the first equality is in $\Z$, the second is in $\Zp$, and we set
$0^0 = 1$. 
\end{lemma}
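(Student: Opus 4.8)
The plan is to prove something stronger than the stated implication: that the residue $\sum_i \partial_p(a_i) \in \Zp$ depends \emph{only} on the integer $N = \prod_i a_i^{a_i}$, and not on the particular factors $a_i$. Since $N = \prod_j b_j^{b_j}$ by hypothesis, applying this to both sides gives the conclusion immediately. As a preliminary reduction I would discard any zero terms: a factor $a_i = 0$ contributes $0^0 = 1$ to the product $N$ and $\partial_p(0) = 0$ to the sum, so we may assume every $a_i$ and $b_j$ is at least $1$. Let $v_\ell(m)$ denote the exponent of a prime $\ell$ in a positive integer $m$, so that $N = \prod_\ell \ell^{v_\ell(N)}$ with $v_\ell(N) = \sum_i a_i v_\ell(a_i)$; I abbreviate $v_\ell(N)$ to $v_\ell$.

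The heart of the matter is to produce an explicit formula for $\sum_i \partial_p(a_i) \bmod p$ in terms of the exponents $v_\ell$, and I would do this by splitting the sum according to whether $p \dvd a_i$. For the terms with $p \ndvd a_i$, the definition of $\partial$ gives $\partial_p(a_i) = -a_i \fq{p}(a_i)$, and the multiplicativity of the Fermat quotient (Lemma~\ref{lemma:fq-elem}) expands this as $\partial_p(a_i) \equiv -a_i \sum_{\ell \neq p} v_\ell(a_i) \fq{p}(\ell) \pmod p$. Summing over these $i$ and interchanging the order of summation produces $-\sum_{\ell \neq p}\bigl(\sum_{p \ndvd a_i} a_i v_\ell(a_i)\bigr)\fq{p}(\ell)$; here the terms omitted from the inner sum (those with $p \dvd a_i$) have $a_i \equiv 0$ and so contribute a multiple of $p$, which lets me replace $\sum_{p \ndvd a_i} a_i v_\ell(a_i)$ by the full $v_\ell$. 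This contribution is therefore $-\sum_{\ell \neq p} v_\ell \fq{p}(\ell)$, which depends only on $N$.

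The remaining, more delicate, contribution comes from the terms with $p \dvd a_i$, where the definition gives $\partial_p(a_i) \equiv a_i/p \pmod p$. Writing $a_i = p^{c_i} a_i'$ with $p \ndvd a_i'$ and $c_i \geq 1$, only the terms with $c_i = 1$ survive modulo $p$, each contributing $a_i'$. The key observation is that the $p$-adic part of $N$ records exactly these survivors: expanding $v_p = \sum_i a_i v_p(a_i) = \sum_{c_i \geq 1} c_i p^{c_i} a_i'$ modulo $p^2$ annihilates every term with $c_i \geq 2$ and leaves $v_p \equiv p \sum_{c_i = 1} a_i' \pmod{p^2}$, whence $v_p/p \equiv \sum_{c_i = 1} a_i' \equiv \sum_{p \dvd a_i} \partial_p(a_i) \pmod p$. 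Combining the two contributions gives
\[
\sum_i \partial_p(a_i) \equiv \frac{v_p}{p} - \sum_{\ell \neq p} v_\ell \fq{p}(\ell) \pmod p,
\]
whose right-hand side manifestly depends only on $N$, completing the proof. The main obstacle is precisely this $p \dvd a_i$ case: unlike the Fermat-quotient terms, which reduce cleanly modulo $p$, here one must track $v_p$ modulo $p^2$ to see that only the $c_i = 1$ terms are recorded, and in particular to check that $p \dvd v_p$ so that $v_p/p$ genuinely lands in $\Zp$.
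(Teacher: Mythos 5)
Your proof is correct and follows essentially the same route as the paper: both split each $a_i$ as $p^{c_i}a_i'$, handle the prime-to-$p$ part via multiplicativity of the Fermat quotient and the $p$-part by reading $v_p(N)$ modulo $p^2$, and note that the terms with $c_i \geq 2$ vanish. The only difference is packaging: you record the outcome as an explicit invariant $v_p(N)/p - \sum_{\ell \neq p} v_\ell(N)\,\fq{p}(\ell)$ depending only on $N$, whereas the paper compares the two sides of the two resulting equations directly.
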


\begin{proof}
Since $0^0 = 1$ and $\partial(0) = 0$, it is enough to prove the result in
the case where each of the integers $a_i$ and $b_j$ is strictly positive.
We may then write $a_i = p^{\alpha_i} A_i$ with $\alpha_i \geq 0$ and $p
\ndvd A_i$, and similarly $b_j = p^{\beta_j} B_j$.  We adopt the convention
that by default, the index $i$ ranges over $1, \ldots, n$ and the index $j$
over $1, \ldots, m$.

Assume that $\prod a_i^{a_i} = \prod b_j^{b_j}$.  We have
\begin{align}
\prod a_i^{a_i}
=
p^{\sum \alpha_i a_i} \prod A_i^{a_i}
\end{align}
with $p \ndvd \prod A_i^{a_i}$, and similarly for $\prod b_j^{b_j}$.
It follows that
\begin{align}
\prod A_i^{a_i}         &
=
\prod B_j^{b_j},        
\lbl{eq:ppp-prod}       \\
\sum \alpha_i a_i &
=
\sum \beta_j b_j.
\lbl{eq:ppp-sum}        
\end{align}
We consider each of these equations in turn.

First, since $p \ndvd \prod A_i^{a_i}$, the Fermat quotient
$\fq{p}\bigl(\prod A_i^{a_i}\bigr)$ is well-defined, and the logarithmic
property of $\fq{p}$ (Lemma~\ref{lemma:fq-elem}\bref{part:fq-elem-log})
gives 
\begin{align}
-\fq{p}\Bigl( \prod A_i^{a_i} \Bigr)
=
\sum - a_i \fq{p}(A_i).
\end{align}
Consider the right-hand side as an element of $\Zp$.  When $p \dvd a_i$,
the $i$-summand vanishes.  When $p \ndvd a_i$, the $i$-summand is $-a_i
\fq{p}(a_i) = \partial(a_i)$.  Hence
\begin{align}
-\fq{p}\Bigl( \prod A_i^{a_i} \Bigr)
=
\sum_{i \csuch \alpha_i = 0} \partial(a_i)
\end{align}
in $\Zp$.  A similar result holds for $\prod B_j^{b_j}$, so
equation~\eqref{eq:ppp-prod} gives
\begin{align}
\lbl{eq:ppp-zero}
\sum_{i \csuch \alpha_i = 0} \partial(a_i)
=
\sum_{j \csuch \beta_j = 0} \partial(b_j).
\end{align}

Second,
\begin{align}
\sum_{i = 1}^n \alpha_i a_i 
= 
\sum_{i \csuch \alpha_i \geq 1} \alpha_i a_i,
\end{align}
so $p \dvd \sum \alpha_i a_i$.  Now
\begin{align}
\frac{1}{p} \sum \alpha_i a_i 
=
\sum_{i \csuch \alpha_i \geq 1} \alpha_i p^{\alpha_i - 1} A_i
\equiv
\sum_{i \csuch \alpha_i = 1} A_i \pmod{p},
\end{align}
and if $\alpha_i = 1$ then $A_i = a_i/p = \partial(a_i)$.  
A similar result holds for $\sum \beta_j b_j$, so
equation~\eqref{eq:ppp-sum} gives
\begin{align}
\lbl{eq:ppp-one}
\sum_{i \csuch \alpha_i = 1} \partial(a_i)
=
\sum_{j \csuch \beta_j = 1} \partial(b_j)
\end{align}
in $\Zp$.

Finally, for each $i$ such that $\alpha_i \geq 2$, we have $p^2 \dvd a_i$
and so $\partial(a_i) = 0$ in $\Zp$.  Hence
\begin{align}
\lbl{eq:ppp-two}
\sum_{i \csuch \alpha_i \geq 2} \partial(a_i)
=
\sum_{j \csuch \beta_j \geq 2} \partial(b_j),
\end{align}
both sides being $0$.  Summing equations~\eqref{eq:ppp-zero},
\eqref{eq:ppp-one} and~\eqref{eq:ppp-two} gives the result.
\end{proof}

We deduce that the real entropy of a rational distribution determines its
entropy modulo $p$:

\begin{thm}
\lbl{thm:R-p}
Let $n, m \geq 1$, $\ppi \in \Rat{n}{p}$ and $\ggamma \in
\Rat{m}{p}$.  Then
\begin{align}
H_\R(\ppi) = H_\R(\ggamma) \implies H_p(\ppi) = H_p(\ggamma).
\end{align}
\end{thm}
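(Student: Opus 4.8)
The plan is to reduce the hypothesis $H_\R(\ppi) = H_\R(\ggamma)$ to exactly the hypothesis of Lemma~\ref{lemma:p-prod-par} and then convert its conclusion into the desired statement about $H_p$. The bridge is a common-denominator trick. First I would clear denominators simultaneously: since every $\pi_i$ and $\gamma_j$ is a fraction with denominator prime to $p$, I choose a single integer $L$, prime to $p$, serving as a common denominator for all of them, and write $\pi_i = c_i/L$ and $\gamma_j = d_j/L$ with $c_i, d_j \geq 0$ and $\sum_i c_i = \sum_j d_j = L$ (using $\sum \pi_i = \sum \gamma_j = 1$). Expanding $H_\R(\ppi) = -\sum_i (c_i/L)\log(c_i/L)$ and using $\sum_i c_i = L$ gives $H_\R(\ppi) = \log L - \tfrac{1}{L}\sum_i c_i \log c_i$, and likewise for $\ggamma$ with the \emph{same} $L$. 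Hence $H_\R(\ppi) = H_\R(\ggamma)$ is equivalent to $\sum_i c_i\log c_i = \sum_j d_j \log d_j$, that is, to $\prod_i c_i^{c_i} = \prod_j d_j^{d_j}$ (with $0^0 = 1$). This is precisely the left-hand side of Lemma~\ref{lemma:p-prod-par}, which therefore yields $\sum_i \partial_p(c_i) = \sum_j \partial_p(d_j)$ in $\Zp$.

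It then remains to express $H_p(\ppi)$ in terms of $\sum_i \partial_p(c_i)$. Here I would pick an integer $\ell$ with $\ell L \equiv 1 \pmod p$ and take $a_i = \ell c_i$ as a representative of $\pi_i \in \Zp$: indeed $a_i \equiv \ell c_i \equiv L^{-1} c_i \equiv \pi_i \pmod p$, and $\sum_i a_i = \ell L \equiv 1 \pmod p$, so equation~\eqref{eq:H-partial} applies and gives $H_p(\ppi) = \sum_i \partial(a_i) - \partial(\sum_i a_i)$. Applying the derivation property (Lemma~\ref{lemma:p-deriv-elem}) to $\partial(\ell c_i)$ and to $\partial(\ell L)$, the terms involving $\partial(\ell)$ cancel, leaving $H_p(\ppi) = \ell\bigl(\sum_i \partial(c_i) - \partial(L)\bigr)$ in $\Zp$. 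The identical computation for $\ggamma$ uses the same $L$, hence the same $\ell$ and $\partial(L)$, so $H_p(\ppi) - H_p(\ggamma) = \ell\bigl(\sum_i \partial(c_i) - \sum_j \partial(d_j)\bigr)$, which is $0$ by the previous paragraph.

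The one point requiring care, and the reason I would route the final step through $\partial_p$ rather than through the chain rule applied to uniform distributions, is that some $c_i$ may be divisible by $p$. When $p \mid c_i$ the corresponding $\pi_i$ vanishes in $\Zp$ even though it is a positive real number, so the tempting decomposition $\vc{u}_L = \ppi \of (\vc{u}_{c_1}, \ldots, \vc{u}_{c_n})$ is unavailable (the factor $\vc{u}_{c_i}$ is undefined mod $p$). Since $\partial_p$ is defined on all of $\Z$, the Leibniz computation above is insensitive to this, which is exactly what makes it the robust route; and the same convention $0^0 = 1$, $\partial_p(0) = 0$ makes the zero entries ($c_i = 0$) harmless throughout, so no entries need to be discarded. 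Everything else is routine: the common-denominator reduction, the elementary logarithm manipulation yielding the product equality, and the cancellation of the shared correction terms $\ell$ and $\partial(L)$ once the two distributions are placed over a common denominator.
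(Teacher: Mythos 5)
Your proof is correct and follows essentially the same route as the paper: clear denominators to a common $L$ prime to $p$, translate $H_\R(\ppi)=H_\R(\ggamma)$ into $\prod c_i^{c_i}=\prod d_j^{d_j}$, apply Lemma~\ref{lemma:p-prod-par}, and convert back via Lemma~\ref{lemma:ent-partial}. The only (cosmetic) difference is at the normalization step: the paper rescales the integers so that the common denominator is $\equiv 1 \pmod{p}$, whereas you keep $L$ fixed and absorb the factor $\ell = L^{-1}$ via the Leibniz rule of Lemma~\ref{lemma:p-deriv-elem}, with the $\partial(\ell)$ and $\partial(L)$ terms cancelling between the two distributions.
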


\begin{proof}
We can write
\begin{align}
\ppi = (r_1/t, \ldots, r_n/t),  
\qquad
\ggamma = (s_1/t, \ldots, s_m/t),
\end{align}
where $r_i$, $s_j$ and $t$ are nonnegative integers with $p \ndvd t$ and 
\begin{align}
r_1 + \cdots + r_n = t = s_1 + \cdots + s_m.
\end{align}
By multiplying all of these integers by a constant, we may assume that $t
\equiv 1 \pmod{p}$.

We have
\begin{align}
e^{-H_\R(\ppi)} = \prod_i (r_i/t)^{r_i/t},
\end{align}
with the convention that $0^0 = 1$.  Multiplying both sides by $t$ then
raising to the power of $t$ gives
\begin{align}
t^t e^{-t H_\R(\ppi)} = \prod_i r_i^{r_i}.
\end{align}
By the analogous equation for $\ggamma$ and the assumption that $H_\R(\ppi)
= H_\R(\ggamma)$, it follows that
\begin{align}
\prod_i r_i^{r_i} = \prod_j s_j^{s_j}.
\end{align}
By Lemma~\ref{lemma:p-prod-par}, then,
\begin{align}
\sum_i \partial(r_i) = \sum_j \partial (s_j)
\end{align}
in $\Zp$.  Moreover, $\sum r_i = t = \sum s_j$.  Hence
\begin{align}
\sum_i \partial(r_i) - \partial \Biggl( \sum_i r_i \Biggr)
=
\sum_j \partial(s_j) - \partial \Biggl( \sum_j s_j \Biggr).
\end{align}
But $t \equiv 1 \pmod{p}$, so $r_i$ represents the element $r_i/t = \pi_i$
of $\Zp$, so by Lemma~\ref{lemma:ent-partial}, the left-hand side of this
equation is $H_p(\ppi)$.  Similarly, the right-hand side is
$H_p(\ggamma)$. Hence $H_p(\ppi) = H_p(\ggamma)$.
\end{proof}

It follows that Kontsevich's residue classes of real entropies are
well-defined.  That is, writing
\begin{align}
\Rentp
=
\bigcup_{n = 1}^\infty
\bigl\{ H_\R(\ppi) \such \ppi \in \Rat{n}{p} \bigr\}
\sub \R,
\end{align}
there is a unique map of sets
\begin{align}
[\,\cdot\,] \from \Rentp \to \Zp
\end{align}
such that $[H_\R(\ppi)] = H_p(\ppi)$ for all $\ppi \in
\Rat{n}{p}$ and $n \geq 1$.  

\begin{propn}
The set $\Rentp$ is closed under addition, and the residue map
\begin{align}
{}[\,\cdot\,] \from     
\Rentp
\to     
\Zp
\end{align}
preserves addition.
\end{propn}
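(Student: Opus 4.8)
The plan is to reduce both claims to the multiplicativity of the tensor product. The central observation is that the tensor product of distributions converts the \emph{sum} of entropies into the entropy of a \emph{single} distribution, both over $\R$ and over $\Zp$, so it provides exactly the witness needed to prove closure, and exactly the bridge needed to transport additivity through the residue map.

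Given $\alpha, \beta \in \Rentp$, first I would choose representations $\alpha = H_\R(\ppi)$ and $\beta = H_\R(\ggamma)$ with $\ppi \in \Rat{n}{p}$ and $\ggamma \in \Rat{m}{p}$. The key step is to form the tensor product $\ppi \otimes \ggamma$. Since the entries of $\ppi \otimes \ggamma$ are the products $\pi_i \gamma_j$, and a product of two rationals with denominators prime to $p$ again has denominator prime to $p$, we have $\ppi \otimes \ggamma \in \Rat{nm}{p}$. The multiplication-to-addition property of real entropy (stated in Section~\ref{sec:log-der}) then gives $H_\R(\ppi \otimes \ggamma) = H_\R(\ppi) + H_\R(\ggamma) = \alpha + \beta$, so $\alpha + \beta$ lies in $\Rentp$. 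This establishes closure under addition.

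For additivity of the residue map, I would apply the defining property of $[\,\cdot\,]$ to the witness $\ppi \otimes \ggamma$: since $H_\R(\ppi \otimes \ggamma) = \alpha + \beta$, the definition of the residue map yields $[\alpha + \beta] = H_p(\ppi \otimes \ggamma)$. The mod~$p$ analogue of the tensor property, namely Corollary~\ref{cor:ent-log}, then gives $H_p(\ppi \otimes \ggamma) = H_p(\ppi) + H_p(\ggamma) = [\alpha] + [\beta]$, as required.

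There is no serious obstacle once the tensor product is in hand; the only point requiring care is that the equation $[\alpha + \beta] = H_p(\ppi \otimes \ggamma)$ relies on the \emph{well-definedness} of the residue map, that is, on the fact (Theorem~\ref{thm:R-p}) that $H_p$ is constant on the fibres of $H_\R$. Because $[\,\cdot\,]$ is already known to be well-defined, we are free to evaluate it on whichever rational distribution is most convenient---here, the tensor product---and the two tensor identities, over $\R$ and over $\Zp$, then slot together to give the result.
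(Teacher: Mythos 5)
Your proposal is correct and follows essentially the same route as the paper: both form the tensor product $\ppi \otimes \ggamma$, observe it lies in $\Rat{nm}{p}$, use the real chain rule to get closure, and then combine the well-definedness of $[\,\cdot\,]$ with Corollary~\ref{cor:ent-log} to transport additivity. Your explicit remark that the step $[\alpha+\beta] = H_p(\ppi\otimes\ggamma)$ leans on Theorem~\ref{thm:R-p} is a nice clarification, but the argument is the same one the paper gives.
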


\begin{proof}
Let $\ppi \in \Rat{n}{p}$ and $\ggamma \in \Rat{m}{p}$.  We must
show that $H_\R(\ppi) + H_\R(\ggamma) \in \Rentp$ and 
\begin{align}
[H_\R(\ppi) + H_\R(\ggamma)] 
=
[H_\R(\ppi)] + [H_\R(\ggamma)].
\end{align}
We will use the tensor product of real probability distributions, which is
defined by the same formula as for distributions over $\Zp$
(Section~\ref{sec:properties}).  Evidently $\ppi \otimes \ggamma \in
\Rat{nm}{p}$, and it is an instance of the chain rule that
\begin{align}
H_\R(\ppi \otimes \ggamma)
=
H_\R(\ppi) + H_\R(\ggamma).  
\end{align}
Hence $H_\R(\ppi) + H_\R(\ggamma) \in \Rentp$, and 
\begin{align}
[H_\R(\ppi) + H_\R(\ggamma)]    
&
= 
[H_\R(\ppi \otimes \ggamma)]    
=
H_p(\ppi \otimes \ggamma)       \\
&
=
H_p(\ppi) + H_p(\ggamma)        
=
[H_\R(\ppi)] + [H_\R(\ggamma)],
\end{align}
where the third equality is by Corollary~\ref{cor:ent-log}.  
\end{proof}

\begin{remark}
The set $\Rentp$ appears to have no very simple description.  Evidently it
is an additive submonoid of the $\Q$-linear subspace of $\R$ with basis $\{
\log \ell \such \text{primes } \ell\}$.  One can show that $\log \ell \in
\Rentp$ for each prime $\ell \neq p$ and that $\log p \not\in \Rentp$.
However, some elements of $\Rentp$ do contain components of $\log p$.
\end{remark}

\section{Entropy as a polynomial}
\lbl{sec:poly-defn}

There is an alternative approach to entropy modulo a prime.  Previously, to
define the entropy of a distribution mod $p$, we had to step outside $\Zp$
to make arbitrary choices of integers representing the `probabilities',
then show that the definition was independent of those choices
(Definition~\ref{defn:ent-p}).  We now show how to define $H(\ppi)$
directly as a function of $\pi_1, \ldots, \pi_n$.  That function is a
polynomial, by the following classical fact:

\begin{lemma}
\lbl{lemma:fn-fin-fld}
Let $\fld$ be a finite field with $q$ elements, let $n \geq 0$, and let $F
\from \fld^n \to \fld$ be a function.  Then there is a unique polynomial
$f$ of the form
\begin{align}
\lbl{eq:poly-small-deg}
f(x_1, \ldots, x_n)
=
\sum_{0 \leq r_1, \ldots, r_n < q} c_{r_1, \ldots, r_n} 
x_1^{r_1} \cdots x_n^{r_n}
\end{align}
($c_{r_1, \ldots, r_n} \in \fld$) such that
\begin{align}
f(\pi_1, \ldots, \pi_n) = F(\pi_1, \ldots, \pi_n)
\end{align}
for all $\pi_1, \ldots, \pi_n \in \fld$.
\end{lemma}

\begin{proof}
Write $\fld^{< q}[x_1, \ldots, x_n]$ for the set of polynomials of the
form~\eqref{eq:poly-small-deg}.  Write $R(f) \from \fld^n \to \fld$ for the
function induced by a polynomial $f$ in $n$ variables.  Then $R$ defines a
map
\begin{align}
R \from \fld^{< q}[x_1, \ldots, x_n] \to 
\{\text{functions } \fld^n \to \fld\}.
\end{align}
We have to prove that $R$ is bijective.  Both domain and codomain have
$q^{q^n}$ elements, so it suffices to prove that $R$ is surjective.

First define a polynomial $\delta$ by 
\begin{align}
\delta(x_1, \ldots, x_n) = (1 - x_1^{q - 1}) \cdots (1 - x_n^{q - 1}).
\end{align}
For $a_1, \ldots, a_n \in K$,
\begin{align}
R(\delta)(a_1, \ldots, a_n) 
=
\begin{cases}
1       &\text{if } a_1 = \cdots = a_n = 0,     \\
0       &\text{otherwise}.
\end{cases}
\end{align}
Now, given a function $F \from K^n \to K$, define a polynomial $f$ by
\begin{align}
f(x_1, \ldots, x_n)
=
\sum_{a_1, \ldots, a_n \in K} 
F(a_1, \ldots, a_n) \delta(x_1 - a_1, \ldots, x_n - a_n).
\end{align}
Then $f \in \fld^{< q}[x_1, \ldots, x_n]$ and $R(f) = F$.
\end{proof}

In particular, taking $\fld = \Zp$, entropy modulo~$p$ can be expressed as
a polynomial of degree less than $p$ in each variable.  For each $n \geq
1$, define $h(x_1, \ldots, x_n) \in (\Zp)[x_1, \ldots, x_n]$ by
\begin{align}
h(x_1, \ldots, x_n) 
=
-\sum_{\substack{0 \leq r_1, \ldots, r_n < p\\ r_1 + \cdots + r_n = p}}
\frac{x_1^{r_1} \cdots x_n^{r_n}}{r_1! \cdots r_n!}.
\end{align}

\begin{propn}
\lbl{propn:ent-p-eqv}
For all $n \geq 1$ and $\ppi \in \Pi_n$,
\begin{align}
H(\pi_1, \ldots, \pi_n) = h(\pi_1, \ldots, \pi_n).
\end{align}
\end{propn}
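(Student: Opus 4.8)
The plan is to evaluate $H(\ppi)$ directly from Definition~\ref{defn:ent-p} and match it term by term against $h$. Choose integers $a_1, \ldots, a_n$ representing $\pi_1, \ldots, \pi_n$, so that $\sum_i a_i \equiv 1 \pmod{p}$. By Lemma~\ref{lemma:pps} we then have $\bigl(\sum_i a_i\bigr)^p \equiv 1 \pmod{p^2}$, so $1$ and $\bigl(\sum_i a_i\bigr)^p$ may be interchanged inside the definition of $H$. First I would use this to rewrite $H(\ppi) = \tfrac{1}{p}\bigl(1 - \sum_i a_i^p\bigr)$ in the symmetric form $H(\ppi) \equiv \tfrac{1}{p}\bigl(\bigl(\sum_i a_i\bigr)^p - \sum_i a_i^p\bigr) \pmod{p}$, the point being that the right-hand side is now a homogeneous expression of degree $p$ to which the multinomial theorem applies.

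Next I would expand $\bigl(\sum_i a_i\bigr)^p$ by the multinomial theorem. The monomials indexed by $(r_1, \ldots, r_n)$ with a single $r_j = p$ contribute exactly $\sum_i a_i^p$ and so cancel against the subtracted sum, leaving
\[
H(\ppi) \equiv \frac{1}{p} \sum_{\substack{0 \leq r_1, \ldots, r_n < p\\ r_1 + \cdots + r_n = p}} \frac{p!}{r_1! \cdots r_n!}\, a_1^{r_1} \cdots a_n^{r_n} \pmod{p}.
\]
For each surviving index tuple, $p \ndvd r_1! \cdots r_n!$ (since every $r_i < p$), so the multinomial coefficient $\tfrac{p!}{r_1!\cdots r_n!}$ is divisible by $p$ exactly once and $\tfrac{1}{p}\cdot\tfrac{p!}{r_1!\cdots r_n!} = \tfrac{(p-1)!}{r_1!\cdots r_n!}$ is an integer.

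Finally I would reduce mod $p$. Each $a_i$ may be replaced by $\pi_i \in \Zp$, each $r_i!$ is invertible mod $p$, and Wilson's theorem gives $(p-1)! \equiv -1 \pmod{p}$; hence $\tfrac{(p-1)!}{r_1!\cdots r_n!} \equiv -\tfrac{1}{r_1!\cdots r_n!}$ in $\Zp$. Substituting turns the displayed sum into $-\sum \tfrac{\pi_1^{r_1}\cdots\pi_n^{r_n}}{r_1!\cdots r_n!} = h(\pi_1, \ldots, \pi_n)$, as required. The argument is really the $n$-variable version of the computation in Example~\ref{eg:p-bin}, where for $n = 2$ the single Wilson reduction was packaged as Lemma~\ref{lemma:p-binom}. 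The only genuinely delicate point, and the step I expect to need the most care, is the bookkeeping that simultaneously isolates and cancels the pure $p$th-power terms and extracts the single factor of $p$ from each remaining multinomial coefficient; once the coefficients are seen to be $\tfrac{(p-1)!}{r_1!\cdots r_n!}$, Wilson's theorem supplies exactly the sign that matches the leading minus sign in the definition of $h$.
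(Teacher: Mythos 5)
Your argument is correct and is essentially the paper's own proof: both expand $\bigl(\sum a_i\bigr)^p - \sum a_i^p$ by the multinomial theorem, cancel the pure $p$th-power terms, observe that each remaining coefficient $\tfrac{p!}{r_1!\cdots r_n!}$ carries exactly one factor of $p$, and finish with Wilson's theorem. The only difference is presentational — you divide the exact integer identity by $p$ directly, whereas the paper multiplies through by $(p-1)!$ and verifies a congruence mod $p^2$ — and your version is, if anything, slightly more transparent.
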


\begin{proof}
Let $\pi_1, \ldots, \pi_n \in \Zp$.  We will show that whenever $a_1,
\ldots, a_n$ are integers representing $\pi_1, \ldots, \pi_n$, then
\begin{align}
\lbl{eq:Hh}
\frac{1}{p} \Biggl( 
\Biggl( \sum_{i = 1}^n a_i \Biggr)^p - \sum_{i = 1}^n a_i^p
\Biggr)
\end{align}
is an integer representing $h(\pi_1, \ldots, \pi_n)$.  The result will
follow, since if $\ppi \in \Pi_n$ then $\sum \pi_i = 1$, so $(\sum a_i)^p
\equiv 1 \pmod{p^2}$ by Lemma~\ref{lemma:pps}.

We have to prove that
\begin{align}
\Biggl( \sum_{i = 1}^n a_i \Biggr)^p - \sum_{i = 1}^n a_i^p 
\equiv
-p
\sum_{\substack{0 \leq r_1, \ldots, r_n < p\\ r_1 + \cdots + r_n = p}}
\frac{a_1^{r_1} \cdots a_n^{r_n}}{r_1! \cdots r_n!}
\pmod{p^2}.
\end{align}
Since $(p - 1)!$ is invertible in $\Zps$, an equivalent statement is that
\begin{align}
\lbl{eq:ent-p-eqv-1}
(p - 1)! \Biggl( 
\sum_{i = 1}^n a_i^p  - \Biggl( \sum_{i = 1}^n a_i \Biggr)^p
\Biggr)
\equiv
\sum_{\substack{0 \leq r_1, \ldots, r_n < p\\ r_1 + \cdots + r_n = p}}
\frac{p!}{r_1! \cdots r_n!}
a_1^{r_1} \cdots a_n^{r_n}
\pmod{p^2}.
\end{align}
The right-hand side of~\eqref{eq:ent-p-eqv-1} is $\bigl(\sum a_i\bigr)^p -
\sum a_i^p$, so equation~\eqref{eq:ent-p-eqv-1} reduces to 
\begin{align}
\bigl( (p - 1)! + 1 \bigr)
\Biggl( 
\sum_{i = 1}^n a_i^p - \Biggl( \sum_{i = 1}^n a_i \Biggr)^p
\Biggr)
\equiv
0
\pmod{p^2}.
\end{align}
And since $(p - 1)! \equiv - 1 \pmod{p}$ and $\sum a_i^p \equiv \sum a_i
\equiv \bigl(\sum a_i\bigr)^p \pmod{p}$, this is true. 
\end{proof}

\begin{remark}
The polynomial $h$ is homogeneous of degree $p$, so the induced function on
$(\Zp)^n$ is a degree $1$ homogeneous extension $\bar{H} \from (\Zp)^n \to
\Zp$ of the entropy function $H \from \Pi_n \to \Zp$.  

Everything that we have done for $H$ can also be done for $\bar{H}$.
Equation~\eqref{eq:Hh} expresses $\bar{H}(\ppi)$ in terms of integers $a_i$
representing its arguments.  As in Lemma~\ref{lemma:ent-partial},
$\bar{H}(\ppi)$ can equivalently be expressed as $\sum \partial(a_i) -
\partial\bigl(\sum a_i\bigr)$.  The definition and characterization of
information loss can be extended to finite \demph{measure spaces mod~$p$}
(sets $X$ equipped with an element of $(\Zp)^X$), and the convexity
condition~\eqref{eq:parallel-loss} is then replaced by linearity
conditions: $L(f \cpd f') = L(f) + L(f')$ and $L(\lambda f) = \lambda
L(f)$.  An analogous characterization theorem over $\R$ was already proved
as Corollary~4 of~\cite{CETIL}.
\end{remark}

\section{Polynomial identities satisfied by entropy}
\lbl{sec:poly-ids}

We now establish further polynomial identities in $h$, stronger than the
functional equations previously proved for $H$.  The first is closely
related to the chain rule, as we shall see.

\begin{thm}
\lbl{thm:p-grouping}
Let $n, k_1, \ldots, k_n \geq 0$.  Then $h$ satisfies the following
identity of polynomials in commuting variables $y_{ij}$ over $\Zp$:
\begin{multline*}
h(y_{11}, \ldots, y_{1k_1}, 
\ \ldots, \ 
y_{n1}, \ldots, y_{nk_n})
\\
=
h(y_{11} + \cdots + y_{1k_1}, \ \ldots, \
y_{n1} + \cdots + y_{nk_n})
+
\sum_{i = 1}^n h(y_{i1}, \ldots, y_{ik_i}).
\end{multline*}
\end{thm}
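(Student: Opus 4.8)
The plan is to prove the sharper fact that the identity holds \emph{over the integers}, before any reduction mod~$p$, for a natural characteristic-zero lift of $h$; in that setting it is a one-line telescoping cancellation, and the theorem then follows by reducing mod~$p$.

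First I would introduce the polynomial
\[
g(x_1, \ldots, x_n) = \frac{1}{p}\Bigl( \Bigl(\textstyle\sum_{i} x_i\Bigr)^p - \sum_{i} x_i^p \Bigr),
\]
and check that it lies in $\Z[x_1, \ldots, x_n]$ and reduces to $h$ modulo~$p$. Expanding the $p$-th power by the multinomial theorem, the pure-power terms $x_i^p$ cancel, leaving $g = \sum \frac{1}{p}\binom{p}{r_1, \ldots, r_n} x_1^{r_1}\cdots x_n^{r_n}$, the sum being over $0 \leq r_1, \ldots, r_n < p$ with $\sum r_i = p$. For each such tuple, $p$ divides the numerator of $\binom{p}{r_1,\ldots,r_n} = p!/(r_1!\cdots r_n!)$ exactly once and the denominator not at all, so $\frac{1}{p}\binom{p}{r_1,\ldots,r_n} = (p-1)!/(r_1!\cdots r_n!)$ is an integer; hence $g \in \Z[x_1,\ldots,x_n]$. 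Reducing mod~$p$ and applying Wilson's theorem $(p-1)! \equiv -1 \pmod p$ identifies this coefficient with $-1/(r_1!\cdots r_n!)$, so $g \equiv h \pmod p$. This is precisely the congruence already established in the proof of Proposition~\ref{propn:ent-p-eqv}, so I would simply quote it there.

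Next I would verify the identity for $g$ as an equality in $\Q[y_{ij}]$, hence in $\Z[y_{ij}]$ since every polynomial involved has integer coefficients. Writing $w_i = y_{i1} + \cdots + y_{ik_i}$ and $W = \sum_i w_i = \sum_{i,j} y_{ij}$, the left-hand side is $g$ evaluated at all the $y_{ij}$, namely $\frac{1}{p}\bigl(W^p - \sum_{i,j} y_{ij}^p\bigr)$, while the right-hand side is
\[
g(w_1, \ldots, w_n) + \sum_i g(y_{i1}, \ldots, y_{ik_i})
=
\frac{1}{p}\Bigl(W^p - \textstyle\sum_i w_i^p\Bigr) + \sum_i \frac{1}{p}\Bigl(w_i^p - \textstyle\sum_j y_{ij}^p\Bigr).
\]
The terms $\sum_i w_i^p$ cancel, so both sides equal $\frac{1}{p}\bigl(W^p - \sum_{i,j} y_{ij}^p\bigr)$. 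Reducing this polynomial identity mod~$p$ and substituting $g \equiv h$ yields the stated identity for $h$. The degenerate cases (some $k_i = 0$, or $n = 0$) are covered automatically once empty sums are read as~$0$.

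There is essentially no serious obstacle here: the entire content is the observation that, once one passes to the integer lift $g$ in which division by~$p$ is literal rather than merely formal, the grouping identity is the trivial telescoping of the partial-sum $p$-th powers $w_i^p$. The only points demanding a little care are the integrality of $g$ and its reduction to $h$ (both implicit in Proposition~\ref{propn:ent-p-eqv}), and the remark that, unlike the chain rule of Proposition~\ref{propn:chn-p}, this is an identity of \emph{polynomials} and therefore holds for all values of the variables, not merely for those summing to~$1$.
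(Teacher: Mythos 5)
Your proof is correct, but it takes a genuinely different route from the paper's. The paper works entirely inside $\Zp[y_{ij}]$, expanding the defining sum of $h$ on the left-hand side, grouping the multi-indices by the partial sums $s_i = r_{i1} + \cdots + r_{ik_i}$, and splitting the outer sum into the summands with all $s_i < p$ (which reassemble, via the multinomial theorem, into $h$ of the partial sums) and those with a single $s_i = p$ (which give $\sum_i h(y_{i1}, \ldots, y_{ik_i})$). You instead lift to the characteristic-zero polynomial $g = \tfrac{1}{p}\bigl(\bigl(\sum x_i\bigr)^p - \sum x_i^p\bigr) \in \Z[x_1,\ldots,x_n]$, where the grouping identity is a one-line telescoping of the $w_i^p$, and then reduce mod $p$; the only work is checking integrality of $g$ and the coefficient identification $g \equiv h \pmod p$, which you do correctly via the multinomial theorem and Wilson's theorem. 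Your reliance on the proof of Proposition~\ref{propn:ent-p-eqv} should be stated with a little care: the congruence proved there is one of \emph{values} at integer arguments, whereas you need a congruence of \emph{polynomial coefficients}; but your own direct computation of the coefficients of $g$ supplies exactly that, so nothing is missing. What your approach buys is conceptual clarity (the identity is the mod-$p$ shadow of a trivial telescoping over $\Z$) and the integrality of $g$ as a reusable fact; what the paper's approach buys is that it never leaves $\Zp[y_{ij}]$ and exhibits explicitly which monomials of the left-hand side contribute to which term on the right, a decomposition it reuses in spirit for the corollaries.
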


\begin{proof}
The left-hand side is equal to
\begin{align}
\lbl{eq:gpg-1}
-\sum_{\substack{0 \leq s_1, \ldots, s_n \leq p\\ s_1 + \cdots + s_n = p}}
\sum
\frac{y_{11}^{r_{11}} \cdots y_{1k_1}^{r_{1k_1}} \ \cdots \ 
y_{n1}^{r_{n1}} \cdots y_{nk_n}^{r_{nk_n}}}%
{r_{11}! \cdots r_{1k_1}! \ \cdots \ r_{n1}! \cdots r_{nk_n}!},
\end{align}
where the inner sum is over all $0 \leq r_{ij} < p$ such that
\begin{align}
r_{11} + \cdots + r_{1k_1} = s_1,
\ \ldots,\ 
r_{n1} + \cdots + r_{nk_n} = s_n.
\end{align}
Split the outer sum into two parts, the
first consisting of the summands in which none of $s_1, \ldots, s_n$ is
equal to $p$, and the second consisting of the summands in which one $s_i$
is equal to $p$ and the others are zero.  Then the
polynomial~\eqref{eq:gpg-1} is equal to $A + B$, where
\begin{align}
A       &
=
-\sum_{\substack{0 \leq s_1, \ldots, s_n < p\\ s_1 + \cdots + s_n = p}}
\ 
\prod_{i = 1}^n
\sum_{\substack{r_{i1}, \ldots, r_{ik_i} \geq 0\\ 
r_{i1} + \cdots + r_{ik_i} = s_i}}
\frac{y_{i1}^{r_{i1}} \cdots y_{ik_i}^{r_{ik_i}}}%
{r_{i1}! \cdots r_{ik_i}!},     \\
B       &
=
-\sum_{i = 1}^n 
\ 
\sum_{\substack{0 \leq r_{i1}, \ldots, r_{ik_i} < p\\ 
r_{i1} + \cdots + r_{ik_i} = p}}
\frac{y_{i1}^{r_{i1}} \cdots y_{ik_i}^{r_{ik_i}}}%
{r_{i1}! \cdots r_{ik_i}!}.
\end{align}
We have
\begin{align}
A       &
=
-\sum_{\substack{0 \leq s_1, \ldots, s_n < p\\ s_1 + \cdots + s_n = p}}
\frac{1}{s_1! \cdots s_n!}
\prod_{i = 1}^n
\sum_{\substack{r_{i1}, \ldots, r_{ik_i} \geq 0\\ 
r_{i1} + \cdots + r_{ik_i} = s_i}}
\frac{s_i!}{r_{i1}! \cdots r_{ik_i}!}      
y_{i1}^{r_{i1}} \cdots y_{ik_i}^{r_{ik_i}}  \\
&
=
-\sum_{\substack{0 \leq s_1, \ldots, s_n < p\\ s_1 + \cdots + s_n = p}}
\frac{1}{s_1! \cdots s_n!}
\prod_{i = 1}^n
(y_{i1} + \cdots + y_{ik_i})^{s_i}      \\
&
=
h(y_{11} + \cdots + y_{1k_1}, \ \ldots,\ 
y_{n1} + \cdots + y_{nk_n})
\end{align}
and
\begin{align}
B = 
\sum_{i = 1}^n h(y_{i1}, \ldots, y_{ik_i}).
\end{align}
The result follows.
\end{proof}

\begin{cor}[Polynomial chain rule]
\lbl{cor:p-chn-poly}
Let $n, k_1, \ldots, k_n \geq 0$.  Then $h$ satisfies the following
identity of polynomials in commuting variables $x_i$, $y_{ij}$ over $\Zp$:
\begin{multline*}
h(x_1 y_{11}, \ldots, x_1 y_{1k_1}, 
\ \ldots, \ 
x_n y_{n1}, \ldots, x_n y_{nk_n})
\\
=
h\bigl( x_1(y_{11} + \cdots + y_{1k_1}), \ \ldots, \
x_n(y_{n1} + \cdots + y_{nk_n}) \bigr)
+
\sum_{i = 1}^n x_i^p h(y_{i1}, \ldots, y_{ik_i}).
\end{multline*}
\end{cor}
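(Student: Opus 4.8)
The plan is to obtain this as a short consequence of Theorem~\ref{thm:p-grouping}, by substituting scaled variables and exploiting the homogeneity of $h$. The single fact I would record first is that $h$ is homogeneous of degree $p$: every monomial in the definition of $h(z_1, \ldots, z_k)$ carries exponents with $r_1 + \cdots + r_k = p$, so
\begin{align*}
h(x z_1, \ldots, x z_k) = x^p\, h(z_1, \ldots, z_k)
\end{align*}
for any further indeterminate $x$.

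Next I would apply Theorem~\ref{thm:p-grouping}, which is an identity in the polynomial ring $\Zp[y_{ij}]$, and substitute $y_{ij} \mapsto x_i y_{ij}$ for all $i, j$. Since a polynomial identity is preserved under any substitution of its variables by elements of a larger ring, the outcome is a valid identity in $\Zp[x_i, y_{ij}]$. The left-hand side of Theorem~\ref{thm:p-grouping} becomes $h(x_1 y_{11}, \ldots, x_n y_{nk_n})$, which is exactly the left-hand side of the corollary.

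On the right-hand side, the first term becomes $h(x_1 y_{11} + \cdots + x_1 y_{1k_1}, \ldots, x_n y_{n1} + \cdots + x_n y_{nk_n})$; pulling $x_i$ out of the $i$th argument yields the first term of the corollary. In the remaining sum $\sum_{i} h(x_i y_{i1}, \ldots, x_i y_{ik_i})$, I would apply the homogeneity identity with the single scalar $x = x_i$ to each summand, converting it into $x_i^p\, h(y_{i1}, \ldots, y_{ik_i})$. Collecting the pieces gives precisely the asserted identity.

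I do not expect any genuine obstacle. The only points needing care are slight: confirming degree-$p$ homogeneity (immediate from the constraint $r_1 + \cdots + r_n = p$), and observing that we are manipulating \emph{polynomial} identities, so no argument about the choice of integer representatives or about congruences mod~$p^2$ is required---in contrast to the proof of the functional chain rule (Proposition~\ref{propn:chn-p}). The appearance of the factor $x_i^p$ rather than $x_i$ is exactly what homogeneity forces, and it is worth noting that this is the polynomial shadow of the functional chain rule's coefficient $\pi_i$, recovered on $\Pi_n$ via Proposition~\ref{propn:ent-p-eqv} since $\pi_i^p = \pi_i$ in $\Zp$.
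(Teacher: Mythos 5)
Your proposal is correct and is essentially identical to the paper's own (one-line) proof: substitute $x_i y_{ij}$ for $y_{ij}$ in Theorem~\ref{thm:p-grouping} and use the degree-$p$ homogeneity of $h$ to extract the factors $x_i^p$. Your added remarks about why no mod-$p^2$ bookkeeping is needed and why $x_i^p$ rather than $x_i$ appears are accurate elaborations, not deviations.
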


\begin{proof}
This follows from Theorem~\ref{thm:p-grouping} on substituting $x_i y_{ij}$
for $y_{ij}$, using the fact that $h$ is homogeneous of degree $p$.
\end{proof}

The original chain rule for entropy mod~$p$ (Proposition~\ref{propn:chn-p})
follows: given $\ppi \in \Pi_n$ and $\ggamma^i \in \Pi_{k_i}$ as in that
proposition, substitute $x_i = \pi_i$ and $y_{ij} = \gamma^i_j$.

The entropy polynomial $h(x)$ in one variable is $0$, by definition.  But
the entropy polynomial in two variables is nontrivial and satisfies a
cocycle condition:

\begin{cor}
\lbl{cor:cocycle}
The two-variable entropy polynomial $h$ satisfies the polynomial identity
\begin{align}
h(x, y) - h(x, y + z) + h(x + y, z) - h(y, z) = 0.
\end{align}
\end{cor}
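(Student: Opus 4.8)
The plan is to obtain this cocycle identity as an immediate consequence of the polynomial grouping law, Theorem~\ref{thm:p-grouping}, by expanding the three-variable entropy polynomial $h(x, y, z)$ in two different ways and comparing the results. The key auxiliary fact is the one recorded in the paragraph preceding the statement: the one-variable entropy polynomial is identically zero. This is what will make the two expansions collapse to exactly the four terms appearing in the corollary.

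First I would apply Theorem~\ref{thm:p-grouping} with $n = 2$, $k_1 = 2$, $k_2 = 1$, substituting $y_{11} = x$, $y_{12} = y$, $y_{21} = z$. This groups the first two arguments together and yields
\begin{align*}
h(x, y, z) = h(x + y, z) + h(x, y) + h(z).
\end{align*}
Next I would apply the same theorem with $n = 2$, $k_1 = 1$, $k_2 = 2$, substituting $y_{11} = x$, $y_{21} = y$, $y_{22} = z$, which instead groups the last two arguments and yields
\begin{align*}
h(x, y, z) = h(x, y + z) + h(x) + h(y, z).
\end{align*}

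Since $h$ in one variable is the zero polynomial, the terms $h(x)$ and $h(z)$ both vanish, so the two displays give
\begin{align*}
h(x + y, z) + h(x, y) = h(x, y + z) + h(y, z).
\end{align*}
Rearranging this equation produces precisely $h(x, y) - h(x, y + z) + h(x + y, z) - h(y, z) = 0$, which is the claimed identity.

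There is essentially no obstacle in this argument; it is pure bookkeeping. The only points demanding care are matching the index conventions of Theorem~\ref{thm:p-grouping} correctly in the two substitutions and confirming that the one-variable entropy terms are the ones that drop out. I would expect the entire proof to be a few lines, with the two applications of the grouping law doing all the work.
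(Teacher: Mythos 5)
Your proof is correct and follows essentially the same route as the paper: two applications of Theorem~\ref{thm:p-grouping} with $n = 2$ and $(k_1, k_2) = (2,1)$, respectively $(1,2)$, followed by comparison of the two expansions of $h(x,y,z)$. Your explicit remark that the one-variable terms $h(x)$ and $h(z)$ vanish is a point the paper leaves implicit, but it is exactly right.
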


Similar results appear in Cathelineau~\cite{CathSHS} (p.~58--59),
Kontsevich~\cite{KontOHL}, and Elbaz--Vincent and Gangl~\cite{EVGFPM}
(Section~2.3). 

\begin{proof}
Theorem~\ref{thm:p-grouping} with $n = 2$ and $(k_1, k_2) = (2, 1)$ gives
\begin{align}
h(x, y, z) = h(x + y, z) + h(x, y),
\end{align}
and similarly,
\begin{align}
h(x, y, z) = h(x, y + z) + h(y, z).
\end{align}
The result follows.
\end{proof}

We are especially interested in the case where the arguments of the entropy
function sum to $1$.  Under that restriction, $h(x, y)$ reduces to a simple
expression:

\begin{propn}
\lbl{propn:p-to-two}
If $p \neq 2$, there is an identity of polynomials
\begin{align}
h(x, 1 - x) = \sum_{r = 1}^{p - 1} \frac{x^r}{r},
\end{align}
and if $p = 2$, there is an identity of polynomials 
\begin{align}
h(x, 1 - x) = x + x^2.
\end{align}
\end{propn}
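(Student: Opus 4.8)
The plan is to treat the two cases separately. The case $p = 2$ is a one-line computation, while the case $p \neq 2$ is a lift, from the level of values to the level of polynomials, of the binomial calculation already carried out in Example~\ref{eg:p-bin}. For $p = 2$, the only pair $(r_1, r_2)$ with $0 \le r_1, r_2 < 2$ and $r_1 + r_2 = 2$ is $(1, 1)$, so directly from the definition $h(x, y) = -xy = xy$ in $\Ztwo[x, y]$. Substituting $y = 1 - x$ gives $h(x, 1 - x) = x(1 - x) = x - x^2 = x + x^2$ in $\Ztwo[x]$, as required.

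For $p \neq 2$, the first step is to observe that the integer polynomial
\[
\frac{1}{p}\bigl( (x + y)^p - x^p - y^p \bigr) \in \Z[x, y]
\]
reduces modulo $p$ to $h(x, y)$; this is the polynomial incarnation of equation~\eqref{eq:Hh}. Indeed, expanding by the binomial theorem and removing the two terms $x^p$ and $y^p$ leaves $\sum_{r = 1}^{p - 1} \binom{p}{r} x^r y^{p - r}$, and each $\binom{p}{r}$ is divisible by $p$ for $0 < r < p$; dividing by $p$ and using $(p - 1)! \equiv -1 \pmod{p}$ recovers exactly the coefficients $-1/(r!\,(p - r)!)$ of $h$. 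I would then substitute $y = 1 - x$, which is legitimate because substitution into an integer polynomial commutes with reduction modulo $p$. The payoff of this substitution is that $(x + y)^p$ collapses to $1$, so that $h(x, 1 - x)$ is the reduction modulo $p$ of
\[
\frac{1}{p}\bigl( 1 - x^p - (1 - x)^p \bigr).
\]

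The final step is to expand $(1 - x)^p = \sum_{r = 0}^{p} \binom{p}{r} (-1)^r x^r$ and collect coefficients. The constant terms cancel, and (since $p$ is odd) the two $x^p$ terms cancel as well, leaving $\sum_{r = 1}^{p - 1} (-1)^{r + 1} \binom{p}{r} x^r$, an integer polynomial of degree $p - 1$. After division by $p$, the coefficient of $x^r$ is $(-1)^{r + 1} \tfrac{1}{p} \binom{p}{r}$; writing $\tfrac{1}{p} \binom{p}{r} = \tfrac{1}{r} \binom{p - 1}{r - 1}$ and applying Lemma~\ref{lemma:p-binom} (so that $\binom{p - 1}{r - 1} \equiv (-1)^{r - 1} \pmod{p}$) reduces this coefficient to $(-1)^{r + 1} (-1)^{r - 1} / r = 1/r$ in $\Zp$. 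Hence $h(x, 1 - x) = \sum_{r = 1}^{p - 1} x^r / r$, as claimed. I expect no serious obstacle here: this is essentially the manipulation of Example~\ref{eg:p-bin}, the only genuinely new point being that it is valid as an identity of polynomials rather than merely of functions, which is why the commutation of reduction modulo $p$ with the substitution $y \mapsto 1 - x$ deserves a word. (A slicker but less self-contained route for $p \neq 2$: the polynomials $h(x, 1 - x)$ and $\sum_{r = 1}^{p - 1} x^r / r$ both have degree $< p$ and, by Proposition~\ref{propn:ent-p-eqv} together with Example~\ref{eg:p-bin}, induce the same function $\Zp \to \Zp$, so uniqueness in Lemma~\ref{lemma:fn-fin-fld} with $n = 1$ forces them to coincide; but checking that $h(x, 1 - x)$ has degree $< p$ already requires the cancellation of the $x^p$ terms computed above.)
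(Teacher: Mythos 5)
Your proof is correct, but it takes a genuinely different route from the paper's. The paper does not recompute the coefficients at all: it quotes the \emph{functional} identity $h(\pi, 1 - \pi) = \sum_{r = 1}^{p - 1} \pi^r / r$ already established in Example~\ref{eg:p-bin}, and upgrades it to a polynomial identity via the uniqueness clause of Lemma~\ref{lemma:fn-fin-fld}, which applies once both sides are known to have degree $< p$; the whole proof therefore reduces to checking that the coefficient of $x^p$ in $h(x, 1 - x) = -\sum_{r = 1}^{p - 1} x^r (1 - x)^{p - r} / (r! \, (p - r)!)$ vanishes, which the paper does via Wilson's theorem and the observation that $\sum_{r = 1}^{p - 1} 1/r = \sum_{r = 1}^{p - 1} r = 0$ in $\Zp$ for odd $p$. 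That is exactly the ``slicker but less self-contained route'' of your closing parenthesis, except that the paper's degree check is a direct one-coefficient computation rather than the $\Z$-level cancellation you describe. Your main argument instead lifts everything to $\Z[x, y]$: you identify $h(x, y)$ with the mod-$p$ reduction of $\tfrac{1}{p}\bigl((x + y)^p - x^p - y^p\bigr)$ (a correct, cleanly justified polynomial-level restatement of equation~\eqref{eq:Hh}), substitute $y = 1 - x$, and read off the coefficients directly with Lemma~\ref{lemma:p-binom}. This is more self-contained---it never invokes Lemma~\ref{lemma:fn-fin-fld} or the functional computation of Example~\ref{eg:p-bin}---at the cost of essentially redoing that computation one level up. Both arguments use the hypothesis that $p$ is odd at the same critical moment (your cancellation of the two $x^p$ terms against each other; the paper's vanishing of $\sum_{r = 1}^{p - 1} 1/r$), consistent with the separate and correct treatment of $p = 2$.
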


\begin{proof}
The case $p = 2$ is trivial; suppose otherwise.  In Example~\ref{eg:p-bin},
we proved the equality of functions
\begin{align}
h(\pi, 1 - \pi) = \sum_{r = 1}^{p - 1} \frac{\pi^r}{r}
\end{align}
($\pi \in \Zp$).  We now have to prove that this is a \emph{polynomial}
identity.  By Lemma~\ref{lemma:fn-fin-fld}, it suffices to show that the
polynomial
\begin{align}
h(x, 1 - x) 
= 
-\sum_{r = 1}^{p - 1} \frac{x^r(1 - x)^{p - r}}{r!(p - r)!}
\end{align}
has degree strictly less than $p$.  Since it plainly has degree at most
$p$, we only need to show that the coefficient of $x^p$ vanishes.  

The coefficient of $x^p$ in $h(x, 1 - x)$ is
\begin{align}
-\sum_{r = 1}^{p - 1} \frac{(-1)^{p - r}}{r! (p - r)!}.
\end{align}
For $1 \leq r \leq p - 1$,
\begin{align}
- \frac{(-1)^{p - r}}{r! (p - r)!}
=
(-1)^{p - r} \frac{(p - 1)!}{r! (p - r)!}
=
(-1)^{p - r} \frac{1}{r} \binom{p - 1}{r - 1}
=
(-1)^{p - 1} \frac{1}{r}
\end{align}
in $\Zp$, using first the fact that $(p - 1)! = -1$ and then
Lemma~\ref{lemma:p-binom}.  Hence the coefficient of $x^p$ in $h(x, 1 - x)$
is $(-1)^{p - 1} \sum_{r = 1}^{p - 1} 1/r$.  But $r \mapsto 1/r$ defines a
permutation of $(\Zp)^\times$, so the sum is equal to $\sum_{r = 1}^{p
  - 1} r$, which is $0$ since $p$ is odd.
\end{proof}

Following Elbaz-Vincent and Gangl~\cite{EVGOPI}, we write
\begin{align}
\lbl{eq:sterling}
\pounds_1(x) 
= 
h(x, 1 - x)
=
\begin{cases}
\sum_{r = 1}^{p - 1} x^r/r      &\text{if } p \neq 2,   \\
x + x^2                         &\text{if } p = 2.
\end{cases}
\end{align}
(Elbaz-Vincent and Gangl assumed that $p \neq 2$.) Despite the lack of
formal resemblance, $\pounds_1$ is the mod~$p$ analogue of the
real function
\begin{align}
\lbl{eq:real-two}
x 
\mapsto 
H_\R(x, 1 - x)
=
- x \log x - (1 - x) \log(1 - x).
\end{align}

Since $h$ is evidently a symmetric polynomial, 
\begin{align}
\lbl{eq:pounds-sym}
\pounds_1(x) = \pounds_1(1 - x)
\end{align}
in $(\Zp)[x]$.  The polynomial $\pounds_1$ also satisfies a more
complicated identity whose significance will be explained shortly.
Following Kontsevich~\cite{KontOHL}, Elbaz-Vincent and Gangl proved:
 
\begin{propn}[Elbaz-Vincent and Gangl]
\lbl{propn:p-feith}
There is a polynomial identity
\begin{align}
\pounds_1(x) + (1 - x)^p \pounds_1\biggl( \frac{y}{1 - x} \biggr)
=
\pounds_1(y) + (1 - y)^p \pounds_1\biggl( \frac{x}{1 - y} \biggr).
\end{align}
\end{propn}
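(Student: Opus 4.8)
The plan is to recognize the asserted identity as nothing more than the symmetry of the three-variable entropy polynomial $h(x,y,z)$, read off by grouping its arguments in two different ways and then specializing to the hyperplane $x+y+z=1$. Concretely, I will show that \emph{both} sides of the claimed identity are equal to $h(x,y,1-x-y)$, after which the proposition is immediate.

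First I would record the grouping identity coming from Theorem~\ref{thm:p-grouping}. Taking $n=2$, $(k_1,k_2)=(1,2)$ and using that the one-variable entropy polynomial vanishes, that theorem gives the polynomial identity $h(x,y,z)=h(x,y+z)+h(y,z)$ in $(\Zp)[x,y,z]$. Since $h$ is a symmetric polynomial, I may also swap the roles of $x$ and $y$ to obtain $h(x,y,z)=h(y,x,z)=h(y,x+z)+h(x,z)$. Now substitute $z=1-x-y$. In the first expression the pair $(x,y+z)$ becomes $(x,1-x)$, so $h(x,y+z)=\pounds_1(x)$ by the definition~\eqref{eq:sterling} of $\pounds_1$, while in $h(y,z)=h(y,1-x-y)$ the two arguments sum to $1-x$. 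Symmetrically, in the second expression $h(y,x+z)=\pounds_1(y)$ and in $h(x,z)=h(x,1-x-y)$ the arguments sum to $1-y$. It remains only to rewrite these two ``unbalanced'' terms in terms of $\pounds_1$.

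For this I would use that $h$ is homogeneous of degree $p$. Working in the field of rational functions $(\Zp)(x,y)$, where $1-x$ is invertible, homogeneity gives
$$h(y,1-x-y)=(1-x)^p\, h\!\left(\frac{y}{1-x},\,1-\frac{y}{1-x}\right)=(1-x)^p\,\pounds_1\!\left(\frac{y}{1-x}\right),$$
and likewise $h(x,1-x-y)=(1-y)^p\,\pounds_1\!\left(\frac{x}{1-y}\right)$. Both of these expressions are genuinely polynomials in $x,y$ (each monomial $y^r/r$ of $\pounds_1$ with $r\leq p-1$ produces $y^r(1-x)^{p-r}/r$, and $p-r\geq 1$), so these equalities of rational functions are in fact equalities in $(\Zp)[x,y]$. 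Substituting back and comparing the two expressions for $h(x,y,1-x-y)$ yields exactly the claimed identity.

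The only delicate point is the homogeneity rescaling, where dividing by $1-x$ (respectively $1-y$) is a priori illegitimate over the polynomial ring; I expect this to be the main thing to get right, and I would handle it precisely by passing to $(\Zp)(x,y)$ and then observing that both sides are polynomials, so that an identity of rational functions forces an identity of polynomials. I note finally that the argument uses neither a case distinction nor the hypothesis $p\neq 2$: the conversion $h(t,1-t)=\pounds_1(t)$ is the definition of $\pounds_1$ in both cases, so the same derivation proves the proposition uniformly for every prime $p$.
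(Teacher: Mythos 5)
Your proof is correct and is essentially the paper's own argument: both use the homogeneity of $h$ (degree $p$) to rewrite each $\pounds_1$ term as a value of the two-variable polynomial $h$, and then reduce the claim to the two decompositions $h(x,y,z)=h(x,y+z)+h(y,z)=h(y,x+z)+h(x,z)$ coming from Theorem~\ref{thm:p-grouping} and the symmetry of $h$ (the paper packages these as the cocycle identity of Corollary~\ref{cor:cocycle} and works from the claimed identity backwards, whereas you work forwards from $h(x,y,1-x-y)$; the content is identical). Your extra care about passing to $(\Zp)(x,y)$ and checking polynomiality matches the paper's remark that $\deg(\pounds_1)\leq p$, though note that for $p=2$ the monomial $x^2$ of $\pounds_1$ has $r=p$, so the relevant exponent is $p-r=0$ rather than $\geq 1$ — still a polynomial, so nothing breaks.
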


Both sides of this equation are indeed polynomials, as $\deg(\pounds_1)
\leq p$.  Elbaz-Vincent and Gangl proved it using differential equations
(Proposition~5.9(2) of~\cite{EVGOPI}), but it also follows easily from the
cocycle identity for $h$:

\begin{proof}
Since $h$ is homogeneous of degree $p$, 
\begin{align}
h(x, y) = (x + y)^p \pounds_1\biggl( \frac{x}{x + y} \biggr).
\end{align}
The identity to be proved is, therefore, equivalent to
\begin{align}
h(x, 1 - x) + h(y, 1 - x - y)
=
h(y, 1 - y) + h(x, 1 - x - y).
\end{align}
Since $h$ is symmetric, this in turn is equivalent to
\begin{align}
h(x, 1 - x - y) - h(x, 1 - x) + h(1 - y, y) - h(1 - x - y, y) = 0,
\end{align}
which is an instance of the cocycle identity
of Corollary~\ref{cor:cocycle}. 
\end{proof}

Proposition~\ref{propn:p-feith} can be understood as follows.  Any
finite probability distribution can be expressed as an iterated composite
of distributions on two elements.  Hence, using the chain rule, the
entropy of any distribution can be computed in terms of entropies of
distributions on two elements.  In this sense, the sequence of functions
$\bigl( H \from \Pi_n \to \Zp \bigr)_{n \geq 1}$ reduces to the single
function $H \from \Pi_2 \to \Zp$, which is effectively a function in one
variable:
\begin{align}
\begin{array}{cccc}
F\from  &\Zp    &\to            &\Zp,   \\
        &\pi    &\mapsto        &H(\pi, 1 - \pi).
\end{array}
\end{align}
A similar reduction can be performed over $\R$.  

On the other hand, an arbitrary function $F \from \Zp \to \Zp$ cannot
generally be extended to a sequence of functions $\Pi_n \to \Zp$ satisfying
the chain rule (nor, similarly, in the real case).  Indeed, by expressing a
distribution $(\pi, 1 - \pi - \tau, \tau)$ as a composite in two different
ways, we obtain an equation that $F$ must satisfy if such an extension is
to exist. Assuming the symmetry property $F(\pi) = F(1 - \pi)$, that
equation is
\begin{align}
\lbl{eq:feith}
F(\pi) + (1 - \pi) F\biggl(\frac{\tau}{1 - \pi}\biggr)
=
F(\tau) + (1 - \tau) F\biggl(\frac{\pi}{1 - \tau}\biggr)
\end{align}
($\pi, \tau \neq 1$); compare Proposition~\ref{propn:p-feith}.

Equation~\eqref{eq:feith} is sometimes called the `fundamental equation of
information theory'.  Thus, Proposition~\ref{propn:p-feith} is a polynomial
version mod $p$ of the fundamental equation.  Over $\R$, it has been
studied since at least 1958~\cite{Tver}.  Assuming that $F$ is symmetric,
the fundamental equation is the only obstacle to the extension problem, in
the sense that if $F$ satisfies~\eqref{eq:feith} then the extension can be
performed.

In the real case, the function~\eqref{eq:real-two} is a solution of the
fundamental equation.  Up to a scalar multiple, it is the \emph{only}
measurable solution $F$ of the fundamental equation satisfying $F(0) =
F(1)$.  It can be deduced that up to a constant factor, Shannon entropy for
finite real probability distributions is characterized uniquely by
measurability, symmetry and the chain rule (Lee~\cite{Lee}).

In the mod~$p$ case, we know that the function $F = \pounds_1$ is symmetric
and satisfies the fundamental equation.  Since any such function $F$ can be
extended to a sequence of functions $\Pi_n \to \Zp$ satisfying the chain
rule, it follows from Theorem~\ref{thm:fad-p} that up to a constant factor,
$\pounds_1$ is the unique symmetric solution of the fundamental equation.

\begin{remark}
\lbl{rmk:k}
In his seminal note~\cite{KontOHL}, Kontsevich unified the real and
mod~$p$ cases with a homological argument, using a cocycle identity
equivalent to that in Corollary~\ref{cor:cocycle}.  In doing so, he
established that $\sum_{0 < r < p} \pi^r/r$ is the correct formula for the
entropy mod~$p$ of a distribution $(\pi, 1 - \pi)$ mod~$p$ on two elements
(assuming, as he did, that $p \neq 2$).  Although he gave no definition of
the entropy of a probability distribution mod $p$ on an arbitrary finite
number of elements, his arguments showed that a unique reasonable such
definition must exist.

The present work develops the framework hinted at in~\cite{KontOHL}, and
provides the further definition and characterization of information loss
mod~$p$.  It also makes two improvements to~\cite{KontOHL}.

The first is the
streamlined inclusion of the case $p = 2$.  
The second is the dropping of all symmetry requirements.  In axiomatic
approaches to entropy based on the fundamental equation of information
theory~\eqref{eq:feith}, such as those of Lee~\cite{Lee} and Kontsevich,
the symmetry axiom $F(\pi) = F(1 - \pi)$ is essential.  Indeed, $F(\pi) =
\pi$ is also a solution of~\eqref{eq:feith}, and similarly, the polynomial
identity of Proposition~\ref{propn:p-feith} is also satisfied by $x^p$ in
place of $\pounds_1(x)$.  The symmetry axiom is used to rule out these and
other undesired solutions.  This is why Lee's characterization of real
entropy needed the assumption that it is symmetric in its arguments.  In
contrast, symmetry is needed nowhere in the approach that we have taken.
\end{remark}

\bibliography{mathrefs}

\end{document}